\thanks{\copyright 2008 Universiteti i Prishtin\"es, Prishtin\"e, Kosov\"e.}
\newcommand{\B}{\mathfrak B}
\newcommand{\A}{\mathcal A}
\newcommand{\bea}{\begin{eqnarray*}}
\newcommand{\eea}{\end{eqnarray*}}
\numberwithin{equation}{section}
\begin{document}
\title[ WEAKLY ALMOST PERIODIC...]
{WEAKLY ALMOST PERIODIC BANACH ALGEBRAS ON SEMI-GROUPS  }

\author[B.Khodsiani, A.Rejali]
{Bahram Khodsiani, Ali Rejali}  % in alphabetical order

\address{Bahram Khodsiani\newline
 Department of Mathematics, Isfahan University ,  IRAN}
\email{ ${\texttt {b}}_{-}$khodsiani@sci.ui.ac.ir}

\address{Ali Rejali\newline
Department of Mathematics, Isfahan University ,  IRAN.\newline
}
\email{rejali@sci.ui.ac.ir }

\thanks{Submitted xxxxx   xxxxx. Published xxxxx xxxx.}

\thanks{Supported by the Center of Excellence for Mathematics at the Isfahan university}

\subjclass[2010]{43A10, 43A20, 46H15, 46H25.}
\keywords{WAP-algebra, dual Banach
algebra, Arens regularity, weak almost
periodicity.}

\begin{abstract}
Let $\mathcal{WAP}(\A)$ be the space of all weakly almost periodic functionals on a  Banach algebra $\A$. The Banach algebra $\A$ for which the natural embedding of $\A$ into $\mathcal{WAP}(\A)^*$ is bounded below is called a $\mathcal{WAP}$-algebra. We show that  the second dual of a Banach algebra $\A$ is a $\mathcal{WAP}$-algebra, under each Arens products, if and only if $\A^{**}$ is a dual Banach algebra. This is  equivalent to the Arens regularity of $\A$. For a locally compact foundation semigroup $S$, we show that the absolutely continuous  semigroup measure algebra  $M_a(S)$   is a $\mathcal{WAP}$-algebra if and only if the measure algebra $M_b(S)$ is so.
\end{abstract}

\maketitle
\numberwithin{equation}{section}
\newtheorem{theorem}{Theorem}[section]
\newtheorem{lemma}[theorem]{Lemma}
\newtheorem{proposition}[theorem]{Proposition}
\newtheorem{corollary}[theorem]{Corollary}
\newtheorem*{remark}{Remark}
\newtheorem{examples}[theorem]{Examples}

\section{Introduction and Preliminaries}

%%%%%%%%%%%%%%%%%%%%%%%%%%%%%%%%%%%%%%%%%%%%%%%%%%%%%%%%%%%%%%%%%%%%%%%%%%%%%%%%%%%%%%%%%%%%%%%%%%%%%%%%%%%%%%%%%%%%%%%%
%%%%%%%%%%%%%%%%%%%%%%%%%%%%%%%%%%%%%%%%%%%%%%%%%%%%%%%%%%%%%%%%%%%%%%%%%%%%%%%%%%%%%%%%%%%%%%%%%%%%%%%%%%%%%%%%%%%%%%%%%
%\section{ Absolutely Continuous  Semigroup Measure Algebras as \mathcal{WAP}-Algebras }
The dual $\A^*$ of a Banach algebra $\A$ can be turned into a Banach $\A-$module in a natural way, by setting

$$\langle  f\cdot a, b\rangle=\langle f ,ab\rangle \ \ \ \ \  {\rm and}    \ \ \ \ \langle a\cdot f, b\rangle=\langle f, ba\rangle\ \ \ \ (a,b\in \A, f\in \A^*).$$

A {\it dual Banach algebra} is a Banach algebra $\A$ such that $\A=(\A_*)^*$, as a Banach space,
for some Banach space $\A_*$, and such that $\A_*$ is a closed $\A-$submodule of $\A^*;$ or equivalently,
the multiplication on $\A$ is separately weak*-continuous. It should be remarked that the predual of a dual Banach algebra need not be unique, in general (see \cite{D, DPW}); so we usually point to the involved predual of a dual Banach algebra.

A functional $f\in \A^*$ is said to be {\it weakly almost periodic} if $\{f\cdot a: \|a\|\leq 1\}$
 is relatively weakly  compact in $\A^*$. We denote by $\mathcal{WAP}(\A)$ the set of all weakly
almost periodic elements   of $\A^*.$ It is easy to verify that, $\mathcal{WAP}(\A)$ is a (norm) closed subspace of $\A^*$. For more about weakly almost periodic functionals, see \cite{DL}.

It is known that the multiplication of a Banach algebra $\A$ has    two natural but, in general, different extensions (called Arens products) to the second dual $\A^{**}$ each turning $\A^{**}$  into a Banach algebra. When these extensions are equal, $\A$ is said to be (Arens) regular. It can be verified that $\A$ is Arens regular if and only if $\mathcal{WAP}(\A)=\A^*$. Further  information  for the Arens regularity of Banach algebras can be found in \cite{D, DL}.

$\mathcal{WAP}$-algebras, as a generalization of the Arens regular algebras, has been introduced and intensively studied in \cite{Da2}. Indeed, a Banach algebra $\A$ for which the natural embedding of $\A$ into $\mathcal{WAP}(\A)^*$ is bounded below, is called a $\mathcal{WAP}$-algebra. It has also known that $\A$ is a $\mathcal{WAP}$-algebra if and only if it admits an isomorphic representation on a reflexive Banach space. If $\A$ is a $\mathcal{WAP}$-algebra, then $\mathcal{WAP}(\A)$ separate the points of $\A$ and  so it is $\omega^*$-dense in $\A^*$.

It can be readily verified that every dual Banach algebra, and every Arens regular Banach algebra, is a $\mathcal{WAP}$-algebra for comparison see \cite{Bkh}. Moreover group algebras are also always $\mathcal{WAP}$-algebras, however,  they are neither dual Banach algebras, nor Arens regular in the case where the underlying group is not discrete, see \cite[Corollary3.7]{B} and \cite{y1,Bkh}.  Ample information about $\mathcal{WAP}$-algebras with further details can be found in the impressive paper \cite{Da2}.

The paper is organized as follows.  In section 2 we introduce some algebraic and topologic properties of $\mathcal{WAP}$-algebras.  How to inherit the weakly almost periodicity of Banach algebras to the ideals, quotient spaces, direct sums  and by homomorphisms are studied. We show that  the second dual of a Banach algebra $\A$ is a $\mathcal{WAP}$-algebra, under each Arens products, if and only if $\A$ is  Arens regular.
 In section 3 we deal with a locally compact foundation semigroup $S$, we show that the absolutely continuous  semigroup measure algebra  $M_a(S)$   is a $\mathcal{WAP}$-algebra if and only if the measure algebra $M_b(S)$ is so.
\section{{ Some algebraic and topologic properties of $\mathcal{WAP}$-algebras}}

 N. J. Young turns out a Banach algebra ${\A}$ admits an isometric representation on a reflexive Banach
  space if and only if the weakly almost periodic functionals of norm one  determines the norm on ${\A}$.
The following proposition provides a property qualification of $\mathcal{WAP}$-algebras.
\begin{proposition}\label{equivalent} Let ${\A}$ be a Banach
 algebra. %and $\|.\|_1$ and $\|.\|_2$ are equivalent norm on ${\A}$
 Then the following statements  are equivalent.
\begin{enumerate}
  \item[(i)]
  %The rival dual norm are equivalent. So $$\mathcal{WAP}$({\A},\|.\|_1)=$\mathcal{WAP}$({\A},\|.\|_2)$.
 %\item[(ii)]
 % Let $({\A},\|.\|)$ be a $\mathcal{WAP}$-algebra. Then $({\A},\|.\|_1)$ is a $\mathcal{WAP}$-algebra,where %$\|a\|_1:=k\|a\|,\quad(k>0)$.
 ${\A}$ is a $\mathcal{WAP}$- algebra.
  \item[(ii)]
  %$({\A},\|.\|_1)$ is $\mathcal{WAP}$-algebra if and only if $({\A},\|.\|_2)$ is a $\mathcal{WAP}$-algebra .
There is a reflexive Banach space $E$ and a bounded below continuous representation $\pi:  {\A}\rightarrow B(E)$.
\item[(iii)]There is a dual Banach algebra (or $\mathcal{WAP}$-algebra) ${\B}$ and a bounded below continuous homomorphism $\phi:  {\A}\rightarrow {\B}$.

   %\item[(iv)]There is a $\mathcal{WAP}$-algebra ${\B}$ and a bounded below continuous homomorphism $\phi:  {\A}\rightarrow {\B}$.
\end{enumerate}
\end{proposition}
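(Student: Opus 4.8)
The plan is to prove the cycle of implications $(i)\Rightarrow(ii)\Rightarrow(iii)\Rightarrow(i)$. For $(i)\Rightarrow(ii)$ one may simply quote \cite{Da2} (this is the isomorphic analogue of the Young-type theorem recalled above), or argue directly by a Davis--Figiel--Johnson--Pe\l czy\'nski factorisation: for each norm-one $f\in\mathcal{WAP}(\A)$, apply this factorisation to a suitable weakly compact orbit operator (defined on the unitization $\A^{\#}$, to which $f$ extends as a weakly almost periodic functional) to obtain a reflexive space $E_f$; the dual-module action of each $a\in\A$ scales the relevant weakly compact set by $\|a\|$, hence descends to a contractive homomorphism $\pi_f\colon\A\to B(E_f)$ equipped with vectors $\xi_f\in E_f$, $\eta_f\in E_f^{*}$ of norm at most an absolute constant $K$ with $\langle\pi_f(a)\xi_f,\eta_f\rangle=\langle f,a\rangle$. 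Since $\A$ is a $\mathcal{WAP}$-algebra, the $\ell^{2}$-direct sum $\pi=\bigoplus_f\pi_f$ over all such $f$ is then a bounded-below representation of $\A$ on the reflexive space $E=\bigoplus_f E_f$: indeed $\|\pi(a)\|\ge\sup_f|\langle f,a\rangle|/K^{2}\ge K^{-2}\varepsilon\|a\|$ for some $\varepsilon>0$.

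For $(ii)\Rightarrow(iii)$: when $E$ is reflexive, $B(E)$ is a dual Banach algebra (with predual the projective tensor product of $E$ and $E^{*}$, the product being separately weak*-continuous), so take $\B:=B(E)$ and $\phi:=\pi$. For $(iii)\Rightarrow(i)$, first suppose $\B$ is a dual Banach algebra with predual $\B_*$. The key point is that $\B_*\subseteq\mathcal{WAP}(\B)$: given bounded sequences $(a_n),(b_m)$ in $\B$ for which the two iterated limits of $\langle\psi,a_nb_m\rangle$ exist, pick weak*-cluster points $a$ of $(a_n)$ and $b$ of $(b_m)$; separate weak*-continuity of the product of $\B$ together with $\psi\in\B_*$ forces both iterated limits to equal $\langle\psi,ab\rangle$, so $\psi$ meets Grothendieck's double-limit criterion. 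Next, a direct computation gives $(\phi^{*}g)\cdot a=\phi^{*}(g\cdot\phi(a))$ for $g\in\mathcal{WAP}(\B)$, $a\in\A$, and since $\{g\cdot\phi(a):\|a\|\le1\}$ lies in the relatively weakly compact set $\|\phi\|\cdot\{g\cdot c:\|c\|\le1\}$ while $\phi^{*}$ is weak-to-weak continuous, $\phi^{*}g\in\mathcal{WAP}(\A)$; hence $\phi^{*}(\B_*)\subseteq\mathcal{WAP}(\A)$. As $\B_*$ norms $\B$ and $\phi$ is bounded below, $\sup\{|\langle h,a\rangle|:h\in\mathcal{WAP}(\A),\ \|h\|\le1\}\ge\|\phi\|^{-1}\|\phi(a)\|\ge\|\phi\|^{-1}\varepsilon\|a\|$ for all $a\in\A$, so the canonical embedding $\A\to\mathcal{WAP}(\A)^{*}$ is bounded below, that is, $\A$ is a $\mathcal{WAP}$-algebra. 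If instead $\B$ is only assumed to be a $\mathcal{WAP}$-algebra, compose $\phi$ with the canonical bounded-below homomorphism $\B\to\mathcal{WAP}(\B)^{*}$ and use that $\mathcal{WAP}(\B)^{*}$ is itself a dual Banach algebra with predual $\mathcal{WAP}(\B)$, reducing to the case just treated.

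I expect the main obstacle to be $(i)\Rightarrow(ii)$: extracting a genuine bounded-below \emph{algebra} representation on a reflexive space from the purely metric hypothesis needs the factorisation machine and the check that the module actions descend to the factor spaces $E_f$ (unless this step is imported wholesale from \cite{Da2}). Once that, together with the inclusion $\B_*\subseteq\mathcal{WAP}(\B)$ for dual Banach algebras, is in hand, the remaining implications are routine manipulations of the module actions.
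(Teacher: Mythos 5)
Your proof is correct and follows essentially the same route as the paper: (i)$\Rightarrow$(ii) by representing the dual Banach algebra $\mathcal{WAP}(\A)^*$ on a reflexive space via \cite{Da2}, (ii)$\Rightarrow$(iii) since $B(E)$ is a dual Banach algebra for reflexive $E$, and (iii)$\Rightarrow$(i) via $\phi^*(\mathcal{WAP}(\B))\subseteq\mathcal{WAP}(\A)$ together with the norming inequality. The only difference is that you supply details the paper leaves implicit, namely the factorisation construction behind the cited representation theorem and the Grothendieck double-limit argument showing $\B_*\subseteq\mathcal{WAP}(\B)$, which is precisely what justifies the paper's parenthetical ``since $\B\rightarrow\mathcal{WAP}(\B)^*$ is isometric.''
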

\begin{proof} (i)$\Rightarrow$(ii): The canonical map
$\iota:{\A} \rightarrow \mathcal{WAP}({\A})^*$ is bounded
below and  $\mathcal{WAP}({\A})^*$ is dual Banach algebra. According to
\cite[Corollary3.8]{Da2} there is a reflexive Banach space $E$ such that $\pi:\mathcal{WAP}({\A})^*\rightarrow B(E)$ is an isometric representation.
So $\pi\circ\iota:{\A}\rightarrow B(E)$ is a bounded below
continuous representation.

(ii)$\Rightarrow$ (iii) Trivial.

(iii)$\Rightarrow$ (i): Let  $\iota:{\A} \rightarrow \mathcal{WAP}({\A})^*$ be the canonical map. Suppose that  ${\B}$ is   a dual Banach algebra and $\phi:  {\A}\rightarrow {\B}$ is a  bounded below homomorphism, then
$\phi^*(\mathcal{WAP}({\B}))\subseteq \mathcal{WAP}({\A})$. Also:
\begin{eqnarray*}
% \nonumber to remove numbering (before each equation)
  \|\iota(a)\| &=&\sup \{|f(a)|: f\in \mathcal{WAP}({\A}) \quad  and \quad \|f\|\leq 1\} \\
   &\geq&\sup \{|\phi^*(g)(a)|: g\in \mathcal{WAP}({\B}) \quad  and \quad \|\phi\|\|g\|\leq 1\} \\
   &=& \frac{1}{\|\phi\|}\|\phi(a)\|\quad \mbox{(since }{\B} \rightarrow \mathcal{WAP}({\B})^*\ \mbox{is isometric)}
\end{eqnarray*}

%(i)Let $\alpha $ and $\beta$ be positive numbers such that $$\alpha\|a\|_1\leq\|a\|_2\leq\beta\|a\|_1 \quad(a\in {\A})$$
%then by easy  calculation for each functional $f$ ,we have $$\frac{1}{\beta}\|f\|_1\leq\|f\|_2\leq \frac{1}{\alpha}\|f\|_1$$ and $$\{\frac{1}{\beta}f.a:\|a\|_1\leq1\}\subseteq\{f.a:\|a\|_2\leq1\}\subseteq \{\frac{1}{\alpha}f.a:\|a\|_1\leq1\}$$ and this implies
%$$\mathcal{WAP}$({\A},\|.\|_1)=$\mathcal{WAP}$({\A},\|.\|_2)$.

%(ii) Let $({\A},\|.\|)$ be a $\mathcal{WAP}$-algebra. Then
%$$\|a\|=\sup\{|f(a)|:f\in $\mathcal{WAP}$({\A}) , \|f\|\leq1\}.$$
%By a straightforward calculation , $\|f\|_1=\frac{1}{k}\|f\|$.So that $$\|a\|_1=\sup\{|f(a)|:f\in $\mathcal{WAP}$({\A}) , %\|f\|_1\leq1\}.$$ i.e $({\A},\|.\|_1)$ is a $\mathcal{WAP}$-algebra.

%(ii)The canonical map $\iota:{\A} \rightarrow $\mathcal{WAP}$({\A})^*$ is bounded below for each of  equivalent norm on ${\A}$ imply for other.
So $\iota$ is bounded below.\end{proof}
\begin{corollary}\label{equiva}Let %${\A}$ be an  algebra,
 $\|.\|_1$ and $\|.\|_2$ be equivalent norm on  algebra ${\A}$. Then $({\A},\|.\|_1)$ is a  $\mathcal{WAP}$-algebra if and only if $({\A},\|.\|_2)$ is so.
%Let ${\B}$ be a $\mathcal{WAP}$- algebra and $\varphi:{\A}\rightarrow{\B}$ be a  bounded below continuous  homomorphism.  Then ${\A}$ is a $\mathcal{WAP}$- algebra.
\end{corollary}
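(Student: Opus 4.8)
The plan is to deduce the corollary directly from the equivalence (i)$\Leftrightarrow$(iii) of Proposition~\ref{equivalent}. Write $\A_1=(\A,\|.\|_1)$ and $\A_2=(\A,\|.\|_2)$. Since these two Banach algebras share the same underlying algebra, the identity map $\iota\colon\A_2\to\A_1$ is an algebra homomorphism. Equivalence of the norms provides constants $c,C>0$ with $c\|a\|_2\le\|a\|_1\le C\|a\|_2$ for all $a\in\A$; the right-hand inequality says that $\iota$ is continuous, and the left-hand one says that $\iota$ is bounded below.

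Now suppose $\A_1$ is a $\mathcal{WAP}$-algebra. Taking $\B=\A_1$ (a $\mathcal{WAP}$-algebra) and $\phi=\iota$ in Proposition~\ref{equivalent}, condition (iii) is met for $\A_2$, hence (i) holds and $\A_2$ is a $\mathcal{WAP}$-algebra. Interchanging the roles of $\|.\|_1$ and $\|.\|_2$ yields the converse, which finishes the argument.

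One could equally argue directly: $\A_1^*$ and $\A_2^*$ coincide as vector spaces carrying equivalent norms and the same module actions, and relative weak compactness of the sets $\{f\cdot a:\|a\|_j\le 1\}$ is unaffected by replacing a norm with an equivalent one, so $\mathcal{WAP}(\A_1)=\mathcal{WAP}(\A_2)$ as a subspace; the canonical embeddings into $\mathcal{WAP}(\A)^*$ then differ only by an equivalent renorming, and being bounded below is preserved. Either way, there is no genuine obstacle here: the content is simply that the class of $\mathcal{WAP}$-algebras is stable under isomorphism of Banach algebras, and passing to an equivalent norm is the special case of an isomorphism whose underlying map is the identity.
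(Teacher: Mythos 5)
Your argument is correct and is essentially the paper's own proof: the paper likewise observes that the identity map between the two normed versions of $\A$ is a bounded below continuous homomorphism and invokes Proposition~\ref{equivalent}(iii). The extra direct argument you sketch is a fine sanity check but adds nothing beyond the one-line reduction.
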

\begin{proof} Since the identity map is bounded below,  it is  an immediate consequence of proposition \ref{equivalent} (iii).
\end{proof}
By above corollary, it is convenient to suppose a $\mathcal{WAP}$-algebra $\A$  has an isometric representation on reflexive space $E$.  This can clearly be achieved by  renorming  $\A$ by $\|a\|_1=\|\pi(a)\|, (a\in \A)$ where $\pi$ is a bounded below representation on $E$.
% $\varphi({\A})$ is closed subalgebra of ${\B}$. So that it is a $\mathcal{WAP}$-algebra. Now define $\|a\|_1:=\|\varphi(a)\|$. Then $\|.\|_1$ an is equivalent norm with initial  on ${\A}$.
\begin{corollary}\label{WAP-algebra}Let ${\A}$ be a $\mathcal{WAP}$-algebra and ${\B}$ be a closed subalgebra of ${\A}$. Then ${\B}$ is a $\mathcal{WAP}$-algebra.
\end{corollary}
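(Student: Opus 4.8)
The plan is to invoke Proposition \ref{equivalent} in its form (iii)$\Rightarrow$(i). The key observation is that the inclusion map $\iota:\B\hookrightarrow\A$ is a continuous homomorphism between Banach algebras — here $\B$ is itself a Banach algebra precisely because it is norm-closed in $\A$ — and it is isometric, hence trivially bounded below (with constant $1$). Since $\A$ is a $\mathcal{WAP}$-algebra by hypothesis, statement (iii) of Proposition \ref{equivalent}, applied with the target algebra taken to be $\A$ itself and the homomorphism taken to be $\iota$, applies verbatim and yields that $\B$ is a $\mathcal{WAP}$-algebra.

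An equivalent route, in the same spirit, goes through a reflexive representation: by Proposition \ref{equivalent}(i)$\Rightarrow$(ii) one fixes a reflexive Banach space $E$ and a bounded below continuous representation $\pi:\A\to B(E)$; then $\pi|_{\B}:\B\to B(E)$ is again a continuous representation, and it remains bounded below because the estimate $\|\pi(a)\|\geq c\|a\|$, valid for all $a\in\A$, holds in particular for all $a\in\B$. Proposition \ref{equivalent}(ii)$\Rightarrow$(i) then finishes the argument. Either way the proof is two lines.

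I do not anticipate a genuine obstacle: the statement is a formal consequence of the characterisations already in hand, and in fact it is the reason the three conditions of Proposition \ref{equivalent} were stated at that level of generality. The only point deserving a word of care is the completeness of $\B$, which is exactly why the hypothesis insists that $\B$ be closed; without that, $\B$ would fail to be a Banach algebra and the conclusion would not even be meaningful.
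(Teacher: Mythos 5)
Your proposal is correct and is exactly the paper's argument: the paper also deduces the corollary from Proposition \ref{equivalent}(iii) by noting that the inclusion map ${\B}\hookrightarrow{\A}$ is bounded below. Your extra remarks (the alternative route via a reflexive representation, and the role of closedness in making ${\B}$ a Banach algebra) are consistent elaborations of the same one-line proof.
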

\begin{proof} Since  the inclusion map is bounded below,  it is  an immediate consequence of proposition \ref{equivalent} (iii).
\end{proof}
\begin{proposition}\label{EEE}
Let $E$ be a Banach space. Then the following  statements are
equivalent:
\begin{enumerate}
  \item [(i)]$B(E)$ is  a dual Banach algebra.
  \item [(ii)]$B(E)$ is  a $\mathcal{WAP}$- algebra.
  \item [(iii)]$E\hat\otimes E^*\subseteq \mathcal{WAP}(B(E))$.
  \item[(iv)] $E$ is a reflexive Banach space.
\end{enumerate}
  \end{proposition}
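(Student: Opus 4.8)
The plan is to run the cycle $(iv)\Rightarrow(i)\Rightarrow(ii)\Rightarrow(iv)$ and, separately, to prove $(iv)\Leftrightarrow(iii)$. Two elementary identities will carry almost all of the argument: for $x\in E$ and $f,h\in E^{*}$, $y\in E$ one has in $B(E)$
\[
(x\otimes h)\,(y\otimes f)=\langle y,h\rangle\,(x\otimes f),
\]
and, regarding $x\otimes f$ as the functional $S\mapsto\langle Sx,f\rangle$ on $B(E)$ coming from the canonical map $E\hat\otimes E^{*}\to B(E)^{*}$, one has $(x\otimes f)\cdot S=x\otimes(S^{*}f)$ for $S\in B(E)$. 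The implication $(iv)\Rightarrow(i)$ is then quick: when $E$ is reflexive the standard duality $(E\hat\otimes E^{*})^{*}=B(E,E^{**})=B(E)$ exhibits $B(E)$ as the dual of $E\hat\otimes E^{*}$, and the relations $\langle TS,x\otimes f\rangle=\langle S,(Tx)\otimes f\rangle$ and $\langle ST,x\otimes f\rangle=\langle S,x\otimes(T^{*}f)\rangle$ show that both one-sided multiplications are weak$^{*}$-continuous, so $B(E)$ is a dual Banach algebra. The step $(i)\Rightarrow(ii)$ is recorded in Section~1 (every dual Banach algebra is a $\mathcal{WAP}$-algebra).

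The main step is $(ii)\Rightarrow(iv)$. Assuming $E\neq 0$, I fix $x_{0}\in E\setminus\{0\}$ and, by Hahn--Banach, $f_{0}\in E^{*}$ with $\langle x_{0},f_{0}\rangle=1$, so that $p:=x_{0}\otimes f_{0}$ is a rank-one idempotent in $B(E)$. Since $B(E)$ is a $\mathcal{WAP}$-algebra, Proposition~\ref{equivalent} supplies a reflexive Banach space $F$ and a bounded-below continuous homomorphism $\pi:B(E)\to B(F)$; then $q:=\pi(p)$ is a nonzero idempotent in $B(F)$ (nonzero because $\pi$ is bounded below), and I choose $v_{0}\in qF\setminus\{0\}$ and $\psi_{0}\in F^{*}$ with $\langle v_{0},\psi_{0}\rangle=1$. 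The key observation is that $J:E\to F$, $J(y):=\pi(y\otimes f_{0})\,v_{0}$, is a bounded linear map whose adjoint is onto $E^{*}$: applying $\pi$ to the first displayed identity and evaluating at $v_{0}$ (using $qv_{0}=v_{0}$) gives $\pi(x_{0}\otimes h)\,J(y)=\langle y,h\rangle\,v_{0}$, and pairing with $\psi_{0}$ yields $\langle J(y),\,\pi(x_{0}\otimes h)^{*}\psi_{0}\rangle=\langle y,h\rangle$ for every $h\in E^{*}$, so $J^{*}(\pi(x_{0}\otimes h)^{*}\psi_{0})=h$. A bounded operator with surjective adjoint is bounded below, so $J$ is an isomorphism of $E$ onto a closed subspace of the reflexive space $F$; hence $E$ is reflexive.

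For $(iv)\Leftrightarrow(iii)$ I use the orbit identity $(x\otimes f)\cdot S=x\otimes(S^{*}f)$. For $x\neq0$ the map $g\mapsto x\otimes g$ is an isomorphism of $E^{*}$ onto a closed subspace of $B(E)^{*}$, so $x\otimes B_{E^{*}}$ is, up to a scalar, the unit ball of a subspace isomorphic to $E^{*}$; and, comparing with the operators $x_{1}\otimes g$ where $\langle x_{1},f\rangle=1$, the set $\{S^{*}f:\|S\|\leq1\}$ lies between two balls of $E^{*}$, so $\{(x\otimes f)\cdot S:\|S\|\leq1\}$ lies between two positive multiples of $x\otimes B_{E^{*}}$. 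Consequently, for $x,f\neq0$, $x\otimes f\in\mathcal{WAP}(B(E))$ if and only if $E$ is reflexive. If $E$ is reflexive, every rank-one tensor lies in $\mathcal{WAP}(B(E))$; since $\mathcal{WAP}(B(E))$ is a norm-closed subspace and finite sums of rank-one tensors are dense in $E\hat\otimes E^{*}$, this gives $(iii)$. Conversely $(iii)$ forces some nonzero $x\otimes f$ into $\mathcal{WAP}(B(E))$, hence $E$ is reflexive.

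The step I expect to be the real obstacle is $(ii)\Rightarrow(iv)$: the hypothesis only says that $\mathcal{WAP}(B(E))$ norms $B(E)$ (equivalently, that $B(E)$ has a bounded-below representation on \emph{some} reflexive space), and one must extract from this the far stronger fact that $E$ itself is reflexive. What makes it work is to locate a rank-one idempotent of $B(E)$ inside such a representation and exploit multiplicativity to embed $E$ as a closed subspace of the reflexive representation space; verifying that the natural map $J$ has \emph{surjective} adjoint — not merely that it is injective — is the crucial point, and it is precisely there that the identity $(x_{0}\otimes h)(y\otimes f_{0})=\langle y,h\rangle\,p$ is used.
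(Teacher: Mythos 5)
Your proof is correct, and for the two substantive implications it takes a genuinely different route from the paper. The paper's proof of (ii)$\Rightarrow$(iv) and (iii)$\Rightarrow$(iv) rests entirely on Young's theorem that $\mathcal{WAP}(F(E))=\{0\}$ whenever $E$ is non-reflexive, and its (iv)$\Rightarrow$(iii) is again a citation of Young; only (iv)$\Rightarrow$(i) is argued directly, by the same separate weak$^*$-continuity computation you give. You instead make the argument self-contained: for (ii)$\Rightarrow$(iv) you take the bounded-below representation $\pi:B(E)\to B(F)$ on a reflexive $F$ furnished by Proposition \ref{equivalent}, push a rank-one idempotent $p=x_0\otimes f_0$ through $\pi$, and use the multiplicative identity $(x_0\otimes h)(y\otimes f_0)=\langle y,h\rangle p$ to show that $J(y)=\pi(y\otimes f_0)v_0$ has surjective adjoint, hence embeds $E$ as a closed subspace of $F$ — a clean and correct way to get reflexivity of $E$ out of a representation of $B(E)$ on \emph{some} reflexive space. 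For (iii)$\Leftrightarrow$(iv) you compute the orbit $\{(x\otimes f)\cdot S:\|S\|\le 1\}=\{x\otimes S^*f:\|S\|\le 1\}$ and trap it between two multiples of $x\otimes B_{E^*}$, so that weak almost periodicity of a single nonzero rank-one tensor is equivalent to weak compactness of $B_{E^*}$; density of finite-rank tensors plus norm-closedness of $\mathcal{WAP}(B(E))$ then gives (iv)$\Rightarrow$(iii). What the paper's route buys is brevity (everything is delegated to Young's results, which are anyway in the bibliography); what yours buys is independence from those results and an explicit mechanism — the rank-one calculus — showing \emph{why} reflexivity of $E$ is forced. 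The only cosmetic caveat is the degenerate case $E=\{0\}$, which you correctly set aside since it satisfies all four conditions trivially.
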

  \begin{proof}
(i)$\Rightarrow $ (ii) It is trivial.

  (ii)$\Rightarrow $ (iv)If $E\not =\{0\}$  is not a reflexive Banach space, then $\mathcal{WAP}(F(E))=\{0\}$ (see  \cite{y1} Theorem 3). Thus $\mathcal{WAP}(B(E))$ can not separate  the points of $F(E)$ and so $B(E)$. Thus it's not a $\mathcal{WAP}$-algebra.

   (iv)$\Rightarrow $ (i)Let  $(T_{\alpha})$ be a net  in $B(E)$ an $T,S\in  B(E)$ such that $T_{\alpha}\stackrel{w^*}{\longrightarrow} T$. Then for $\tau=\sum_{n=1}^{\infty}\xi_n\otimes\xi_n'$ in $E\hat\otimes E^*$, predual of $B(E)$, we have:
     \begin{eqnarray*}
    <ST_{\alpha},\tau>&=&\sum_{n=1}^{\infty}<ST_{\alpha}(\xi_n),\xi_n'>\\
    &=&\sum_{n=1}^{\infty}<T_{\alpha}(\xi_n),S^*(\xi_n')>\longrightarrow\sum_{n=1}^{\infty}<T(\xi_n),S^*(\xi_n')>\\
    &=&\sum_{n=1}^{\infty}<ST(\xi_n),\xi_n'>=<ST,\tau>
 \end{eqnarray*}
 And similarly $<T_{\alpha}S,\tau>\longrightarrow<TS,\tau>$.
 Hence the multiplication on $B(E)$ is separately $w^*$-continuous and $B(E)$ is a dual Banach algebra.

  (iii)$\Rightarrow $ (iv)If $E\not =\{0\}$  is not a reflexive Banach space. Then  $\mathcal{WAP}(B(E))=\{0\}$(\cite[Theorem3]{y1}) and $E\hat\otimes E^*=\{0\}$.
   This is a contradiction.

    (iv)$\Rightarrow $ (iii) See  \cite[Theorem 1]{y1}.
    \end{proof}

The  Proposition \ref{equivalent}(iii)with Corollary \ref{WAP-algebra} shows that a $\mathcal{WAP}$-algebra ${\A}$ is  isomorphic to  a closed subalgebra  of a dual Banach algebra, so it is convenient to suppose that a  $\mathcal{WAP}$-algebra is norm closed subalgebra of some dual Banach algebra. This can be achieved by redefining suitable norm.

A large class of  $\mathcal{WAP}$-algebras are  semisimple cummitative Bananach algebras. We refer to \cite{Kaniuth} for this kind of algebras.
\begin{proposition}Let  $\A$ be a semisimple cummitative Bananach algebra, then $\A$ is $\mathcal{WAP}$-algebra.
\end{proposition}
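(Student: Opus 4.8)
The natural route is through criterion (iii) of Proposition~\ref{equivalent}: exhibit a bounded-below homomorphism of $\A$ into a dual Banach algebra or a $\mathcal{WAP}$-algebra. Write $\Phi_{\A}$ for the carrier space of $\A$ and let $\Gamma\colon\A\to C_{0}(\Phi_{\A})$, $\Gamma a=\widehat a$, be the Gelfand homomorphism; recall that semisimplicity of $\A$ is exactly the injectivity of $\Gamma$. As a commutative $C^{*}$-algebra, $C_{0}(\Phi_{\A})$ has a faithful --- hence isometric --- representation on a Hilbert space, which is reflexive, so by Proposition~\ref{equivalent} it is a $\mathcal{WAP}$-algebra; indeed, being a $C^{*}$-algebra, it is even Arens regular. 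Thus $\Gamma$ exhibits $\A$ as a continuous-homomorphic image inside a $\mathcal{WAP}$-algebra, and what criterion (iii) additionally requires is that such a homomorphism be bounded below.

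First I would verify that $\Gamma^{*}\bigl(M(\Phi_{\A})\bigr)\subseteq\mathcal{WAP}(\A)$, where for $\mu\in M(\Phi_{\A})=C_{0}(\Phi_{\A})^{*}$ the functional $\Gamma^{*}\mu\in\A^{*}$ is given by $\langle\Gamma^{*}\mu,a\rangle=\int_{\Phi_{\A}}\widehat a\,d\mu$. Since $\Gamma$ is a homomorphism, a short computation with the module action gives $(\Gamma^{*}\mu)\cdot a=\Gamma^{*}(\widehat a\,\mu)$, where $\widehat a\,\mu\in M(\Phi_{\A})$ is the measure with $d(\widehat a\,\mu)=\widehat a\,d\mu$; hence
\[
\{(\Gamma^{*}\mu)\cdot a:\|a\|\le1\}=\Gamma^{*}\bigl(\{\widehat a\,\mu:\|a\|\le1\}\bigr)\subseteq\Gamma^{*}\bigl(\{g\,\mu:g\in C_{0}(\Phi_{\A}),\ \|g\|_{\infty}\le1\}\bigr),
\]
the inclusion holding because $\|\widehat a\|_{\infty}$ is the spectral radius of $a$ and is at most $\|a\|$. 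Every measure in the last set is absolutely continuous with respect to the finite measure $|\mu|$ with Radon--Nikod\'ym derivative of modulus at most $1$, so that set is uniformly $|\mu|$-integrable and therefore relatively weakly compact in $M(\Phi_{\A})$ by the Dunford--Pettis theorem; since $\Gamma^{*}$ is bounded, and so weak-to-weak continuous, $\{(\Gamma^{*}\mu)\cdot a:\|a\|\le1\}$ is relatively weakly compact in $\A^{*}$, that is, $\Gamma^{*}\mu\in\mathcal{WAP}(\A)$. In particular each character of $\A$ lies in $\mathcal{WAP}(\A)$ (a character $\varphi$ being $\Gamma^{*}\delta_{\varphi}$), so by semisimplicity $\mathcal{WAP}(\A)$ separates the points of $\A$ and is $\omega^{*}$-dense in $\A^{*}$.

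The remaining step --- which I expect to be the main obstacle --- is to promote ``$\mathcal{WAP}(\A)$ separates the points of $\A$'' to ``the canonical map $\A\to\mathcal{WAP}(\A)^{*}$ is bounded below'', equivalently to produce, as in Proposition~\ref{equivalent}(iii), a bounded-below homomorphism of $\A$ into a dual Banach algebra, or an isomorphic representation of $\A$ on a reflexive space. One cannot simply feed $\Gamma$ into Proposition~\ref{equivalent}(iii): $\Gamma$ has norm equal to the spectral radius and need not be bounded below (already for $\A=L^{1}(\mathbb{R})$), and the subspace $\Gamma^{*}(M(\Phi_{\A}))$ above recaptures only the spectral radius, not the original norm. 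The way I would try to close the gap is to enlarge that supply of weakly almost periodic functionals by ones reflecting the multiplicative structure of $\A$ itself rather than merely its Gelfand image --- for instance those $f\in\A^{*}$ for which $a\mapsto f\cdot a$ is weakly compact because multiplication by a fixed element of $\A$ is ``smoothing'' in a suitable sense --- and to show that these already norm $\A$; Corollary~\ref{WAP-algebra} together with Proposition~\ref{equivalent} would then close the argument. For the familiar subclasses such a representation is at hand ($C^{*}$-algebras via the GNS construction; uniform algebras as closed subalgebras of some $C(X)$; group algebras, as recalled in the Introduction; reflexive Banach algebras, which are moreover Arens regular), so the content of the Proposition is that this can be done uniformly over \emph{all} semisimple commutative Banach algebras.
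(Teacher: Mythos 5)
\begin{remark}
Your proposal is, as you yourself acknowledge, not a complete proof: it establishes that $\Gamma^{*}(M(\Phi_{\A}))\subseteq\mathcal{WAP}(\A)$, hence that $\Delta(\A)\subseteq\mathcal{WAP}(\A)$ and that $\mathcal{WAP}(\A)$ separates the points of $\A$, but it never produces the bounded-below map $\A\to\mathcal{WAP}(\A)^{*}$ that the definition of a $\mathcal{WAP}$-algebra actually requires. That first half coincides with the first half of the paper's proof (and your Dunford--Pettis justification of $\Delta(\A)\subseteq\mathcal{WAP}(\A)$ is more careful than the paper, which merely asserts the inclusion); the missing second half is a genuine gap, and separation of points alone is strictly weaker than being a $\mathcal{WAP}$-algebra, as Lemma \ref{Waptt} and the proposition following it make explicit.

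The paper closes the gap as follows: it notes that $\|a\|_{1}=\sup\{|\varphi(a)|:\varphi\in\Delta(\A)\}$ is a norm on $\A$ by semisimplicity, asserts that it is a \emph{Banach} (complete) algebra norm, and invokes the uniqueness-of-norm theorem for semisimple Banach algebras to conclude that $\|\cdot\|_{1}$ is equivalent to $\|\cdot\|$, so that the canonical map into $\mathcal{WAP}(\A)^{*}$ is bounded below. You should be aware that this is exactly the step you flagged as the main obstacle, and your suspicion is well founded: $\|\cdot\|_{1}$ is the spectral radius, the uniqueness-of-norm theorem applies only to norms under which $\A$ is \emph{complete}, and the spectral-radius norm on a semisimple commutative Banach algebra is not complete in general. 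For $\A=L^{1}(\mathbb{R})$ or $\A=\ell^{1}(\mathbb{Z})$ the Gelfand (Fourier) transform has dense, proper, hence non-closed range in $C_{0}(\Phi_{\A})$, so $\|\cdot\|_{1}$ is \emph{not} equivalent to $\|\cdot\|$ there, even though these algebras are $\mathcal{WAP}$-algebras for other reasons ($C_{0}(G)$, respectively $c_{0}$, supplies weakly almost periodic functionals that norm them). So the paper's own argument would prove too much and does not withstand the test you set for yourself; closing the gap genuinely requires, as you suggest, weakly almost periodic functionals beyond $\Gamma^{*}(M(\Phi_{\A}))$ that recapture the original norm, and neither your proposal nor the paper supplies them for an arbitrary semisimple commutative Banach algebra.
\end{remark}
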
\begin{proof}
 Since $\Delta(\A)\subseteq WAP(\A)$  and $\Delta(\A)$ seprerates the points of $\A$ and $||a||_1=\sup\{|f(a)|: a\in \Delta(\A)\}$ is a Banach norm on $\A$. We know that every Banach norm on semisimple Banach algebras are equivalent (\cite{Kaniuth}). Thus $\A$ is a $WAP$- algebra. \end{proof} 
Let $G$ be a locally compact group and $\omega$ is a Borel-measurable weight function on it. Then the Fourier-Stieltjes algebra $B(G)$ and the  weighted measure algebra $M_b(G,\omega)$ are  dual Banach algebras, also the Fourier algebra $A(G)$ and $L^1(G,\omega)$ are   closed ideals  in them respectively. Hence all are $\mathcal{WAP}$-algebras.

  Let  $({\A},{\A}_*)$ be a dual Banach algebra and $I$ is an ideal in ${\A}$, we %denote the  dual  and predual of $I$ by $I^{\perp}$ and ${}^{ \perp}I$, respectively and
  define the annihilators: $$I^{\perp}=\{f\in {\A}^*:f(i)=0 ,\mbox{ for all}\quad i\in I\},\quad {}^{ \perp}I=\{f\in {\A}_*:f(i)=0 ,\mbox{ for all}\quad i\in I\}.$$
  It is easy to see that $({}^{ \perp}I)^{\perp}$ is the ${\omega^*}$-closure of $I$ in ${\A}$.

Let $||a||_{\mathcal{WAP}(\A)}=\sup\{|f(a)|:f\in \mathcal{WAP}(\A), ||f||\leq1\}$. Clearly $||.||_{\mathcal{WAP}(\A)}\leq ||.||$. The following lemma show that whenever the seminorm $||.||_{\mathcal{WAP}(\A)}$ could be a norm on $\A$.
\begin{lemma}\label{Waptt}Let $\A$ be a Banach algebra. Then the following conditions are equivalent.
\begin{enumerate}
%\item Let $\A$ be a $\mathcal{WAP}$-algebra.
\item  $\mathcal{WAP}(\A)$ separates the points of $\A$.
\item  $^\perp (\mathcal{WAP}(\A))=\{0\}$.
\item $\overline {\mathcal{WAP}(\A)}^{\sigma(\A^*,\A)}=\A^*$
\item $||.||_{\mathcal{WAP}(\A)}$ is a norm on $\A$.
\end{enumerate}
\end{lemma}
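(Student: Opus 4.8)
The plan is to prove the four equivalences as a short chain of elementary duality statements, using nothing beyond the Hahn--Banach/bipolar machinery for the pair $(\A,\A^*)$. Throughout write $W=\mathcal{WAP}(\A)$, which we already know to be a norm-closed linear subspace of $\A^*$, and note that in assertion (2) the symbol $^\perp(\cdot)$ is the pre-annihilator taken inside $\A$ itself, i.e.
$^\perp W=\{a\in\A:\,f(a)=0 \text{ for all } f\in W\}$
(not the pre-annihilator in a predual, as in the dual-Banach-algebra notation fixed earlier); it is worth flagging this at the outset.

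First I would dispatch (1)$\Leftrightarrow$(2). Because $W$ is a linear subspace, it separates the points of $\A$ if and only if for every nonzero $a\in\A$ there is some $f\in W$ with $f(a)\neq0$; equivalently, the only element of $\A$ annihilated by all of $W$ is $0$, which is literally $^\perp W=\{0\}$.

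Next, (2)$\Leftrightarrow$(3) is the bipolar theorem applied to the subspace $W\subseteq\A^*$: one has $\overline{W}^{\,\sigma(\A^*,\A)}=(^\perp W)^\perp$, where $(^\perp W)^\perp=\{f\in\A^*:\,f|_{^\perp W}=0\}$. Hence $\overline{W}^{\,\sigma(\A^*,\A)}=\A^*$ iff $(^\perp W)^\perp=\A^*$. If $^\perp W=\{0\}$ then $(^\perp W)^\perp=\{0\}^\perp=\A^*$; conversely, if $(^\perp W)^\perp=\A^*$ and $a\in{}^\perp W$, then every $f\in\A^*$ kills $a$, so $a=0$ since $\A^*$ separates the points of $\A$. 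This gives (2)$\Leftrightarrow$(3).

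Finally, for (1)$\Leftrightarrow$(4): the functional $\|\cdot\|_{\mathcal{WAP}(\A)}$ is always a seminorm dominated by $\|\cdot\|$, so it is a norm exactly when $\|a\|_{\mathcal{WAP}(\A)}=0$ forces $a=0$. By homogeneity, dropping the constraint $\|f\|\leq1$ in the supremum does not change the set where the seminorm vanishes, so $\|a\|_{\mathcal{WAP}(\A)}=0$ iff $f(a)=0$ for all $f\in W$, i.e. iff $a\in{}^\perp W$. Therefore $\|\cdot\|_{\mathcal{WAP}(\A)}$ is a norm iff $^\perp W=\{0\}$, which closes the loop with (1) and (2). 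There is no genuine obstacle in this argument; the only steps needing a little care are recording the correct meaning of $^\perp(\cdot)$ in (2) and invoking the bipolar theorem in the right form for the weak* topology $\sigma(\A^*,\A)$ on $\A^*$.
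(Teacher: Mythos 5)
Your proof is correct and follows essentially the same route as the paper's: both arguments reduce everything to the pre-annihilator $^\perp(\mathcal{WAP}(\A))$ computed in $\A$ and invoke the bipolar identity $({}^\perp W)^\perp=\overline{W}^{\sigma(\A^*,\A)}$ for the equivalence with (3). The only cosmetic difference is that the paper closes the cycle (3)$\Rightarrow$(1) directly with a weak*-convergent net, while you pass back through (2); both are the same duality argument.
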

 \begin{proof} Clearly  (1) and (4)are equivalent.

(1)$\Rightarrow$(2).
Let $0\not=x\in \A$. Then there is a $f\in \mathcal{WAP}(\A)$ such that $f(x)\not=0.$ Thus $x\not\in {}^\perp (\mathcal{WAP}(\A))$, so ${}^\perp (\mathcal{WAP}(\A))=\{0\}$.

   (2)$\Rightarrow$(3). $\A^*=0^\perp=({}^\perp(\mathcal{WAP}(\A))^\perp=\overline {\mathcal{WAP}(\A)}^{\omega^*}$.

     (3)$\Rightarrow$(1). Let $\overline {\mathcal{WAP}(\A)}^{\omega^*}=\A^*$  then  for $0\not= x\in \A$ there is a  $f\in \A^*$ such that $f(x)\not=0$ . Let $f_\alpha\in \mathcal{WAP}(\A)$ convergent to $f$  in $\omega^*$ topology then there is an index $\alpha $ such that $f_\alpha(x)\not=0$ so $\mathcal{WAP}(\A)$ separates the points of $\A$.
 \end{proof}
  By using \cite[Theorem 1]{y1} and the previous lemma, the following is immediate.
\begin{proposition}The following statements are equivalent.
\begin{enumerate}
  \item $\A$ is $\mathcal{WAP}$-algebra.
  \item $\mathcal{WAP}(\A)$ separates the points of $\A$ and $||.||_{\mathcal{WAP}(A)}$ and $||.||$ are equivalent.
\end{enumerate}
\end{proposition}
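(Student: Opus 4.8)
The plan is to read the equivalence straight off the definition of a $\mathcal{WAP}$-algebra, with Young's theorem (\cite[Theorem 1]{y1}) and Lemma \ref{Waptt} as the only inputs, which is why the paper calls it ``immediate''. The starting observation is that for the canonical map $\iota\colon\A\to\mathcal{WAP}(\A)^*$ one has $\|\iota(a)\|=\sup\{|f(a)|:f\in\mathcal{WAP}(\A),\ \|f\|\le 1\}=\|a\|_{\mathcal{WAP}(\A)}$ for every $a\in\A$, while $\|\cdot\|_{\mathcal{WAP}(\A)}\le\|\cdot\|$ holds unconditionally. Thus ``$\iota$ is bounded below'' is literally the assertion that there is $c>0$ with $c\|a\|\le\|a\|_{\mathcal{WAP}(\A)}$ for all $a\in\A$, which together with the reverse inequality is precisely the statement that $\|\cdot\|_{\mathcal{WAP}(\A)}$ and $\|\cdot\|$ are equivalent.

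For (1)$\Rightarrow$(2): if $\A$ is a $\mathcal{WAP}$-algebra then $\iota$ is bounded below, so by the previous paragraph $\|\cdot\|_{\mathcal{WAP}(\A)}$ is equivalent to $\|\cdot\|$; in particular $\|\cdot\|_{\mathcal{WAP}(\A)}$ is a norm, and Lemma \ref{Waptt} (the implication (4)$\Rightarrow$(1)) then gives that $\mathcal{WAP}(\A)$ separates the points of $\A$. For (2)$\Rightarrow$(1): if the two norms are equivalent there is $c>0$ with $c\|a\|\le\|a\|_{\mathcal{WAP}(\A)}=\|\iota(a)\|$, so $\iota$ is bounded below and $\A$ is a $\mathcal{WAP}$-algebra. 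If one prefers a representation-theoretic phrasing, Young's theorem \cite[Theorem 1]{y1} enters here: once $\|\cdot\|_{\mathcal{WAP}(\A)}$ is a norm one may renorm $\A$ by it (cf.\ Corollary \ref{equiva}), and with this norm the weakly almost periodic functionals of norm one determine the norm, so \cite[Theorem 1]{y1} supplies an isometric representation on a reflexive space and Proposition \ref{equivalent} closes the loop.

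Since every ingredient is already in place, there is no genuine obstacle; the only point worth a line of care is that the hypothesis ``$\mathcal{WAP}(\A)$ separates the points of $\A$'' in (2) is exactly what makes $\|\cdot\|_{\mathcal{WAP}(\A)}$ a bona fide norm (via Lemma \ref{Waptt}), so that the phrase ``equivalent norms'' is meaningful — and under norm-equivalence that hypothesis is in fact automatic, so (2) could be stated more economically as ``$\|\cdot\|_{\mathcal{WAP}(\A)}$ and $\|\cdot\|$ are equivalent''.
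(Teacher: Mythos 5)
Your proof is correct and follows the same route the paper intends: the paper gives no written proof beyond citing \cite[Theorem 1]{y1} and Lemma \ref{Waptt}, and your identification of $\|\iota(a)\|$ with $\|a\|_{\mathcal{WAP}(\A)}$ together with the unconditional bound $\|\cdot\|_{\mathcal{WAP}(\A)}\le\|\cdot\|$ is exactly the ``immediate'' argument being invoked. Your closing observation that the separation hypothesis in (2) is automatic under norm equivalence is also accurate.
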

\begin{theorem}
Let ${\A}=(\A_*)^*$ be a dual Banach  algebra. Let $I$ be a  closed ideal in ${\A}$, then the following statements are equivalent.
\begin{enumerate}
  \item[(i)]
 $\frac{{\A}}{I}$  is a dual Banach  algebra with respect to $^\perp I$.

  \item[(ii)] $I$ is $\omega^*$- closed ideal in $\A$.
\end{enumerate}
\end{theorem}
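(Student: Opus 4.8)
The plan is to prove the two implications separately, using the predual $\A_*$ of $\A$ throughout and the identification $({}^{\perp}I)^{\perp}=\overline{I}^{\omega^*}$ noted just before the statement.

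For (ii)$\Rightarrow$(i): assume $I$ is $\omega^*$-closed. First I would observe that $\frac{\A}{I}$ is a dual Banach algebra precisely when ${}^{\perp}I$ is a predual, i.e. when $(\frac{\A}{I})^* $ is isometrically the dual of ${}^{\perp}I$ and the latter is a closed submodule. The natural candidate identification is $(\frac{\A}{I})^* = I^{\perp} \subseteq \A^* = (\A_*)^{**}$, and since $I$ is $\omega^*$-closed we have $I = ({}^{\perp}I)^{\perp}$, so by the bipolar theorem $I^{\perp}$ (taken inside $\A^*$, but restricted appropriately) is canonically $({}^{\perp}I)^{*}$; concretely, $\frac{\A}{I} = \frac{(\A_*)^*}{({}^{\perp}I)^{\perp}} \cong ({}^{\perp}I)^*$ isometrically, which is the standard duality $\frac{X^*}{Y^{\perp}} \cong Y^*$ for a closed subspace $Y = {}^{\perp}I$ of $X = \A_*$. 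Then I would check that ${}^{\perp}I$ is a closed $\frac{\A}{I}$-submodule of $({}^{\perp}I)^* = (\frac{\A}{I})^*$: this follows because ${}^{\perp}I$ is already an $\A$-submodule of $\A_*$ (here one uses that $I$ is a two-sided ideal, so $f \in {}^{\perp}I$, $a \in \A$ gives $\langle a \cdot f, i\rangle = \langle f, ia\rangle = 0$ and similarly on the other side), and the $\A$-action descends to an $\frac{\A}{I}$-action. Equivalently, and perhaps more cleanly, I would verify directly that multiplication on $\frac{\A}{I}$ is separately $\omega^*$-continuous, pulling back nets via the $\omega^*$-continuous quotient-adjoint map.

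For (i)$\Rightarrow$(ii): suppose $\frac{\A}{I}$ is a dual Banach algebra with predual ${}^{\perp}I$. I want to conclude $I$ is $\omega^*$-closed, i.e. $I = ({}^{\perp}I)^{\perp}$. The inclusion $I \subseteq ({}^{\perp}I)^{\perp}$ is automatic. For the reverse, the hypothesis that ${}^{\perp}I$ serves as a predual of $\frac{\A}{I}$ forces, via the duality $\frac{\A}{I} \cong ({}^{\perp}I)^*$, that the kernel of the quotient map $\A \to ({}^{\perp}I)^*$ — computed as the annihilator (in $\A = (\A_*)^*$) of ${}^{\perp}I$, which is exactly $({}^{\perp}I)^{\perp}$ — coincides with $I$. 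In other words, the statement that ${}^{\perp}I \subseteq \A_*$ is genuinely a predual of $\frac{\A}{I}$ already encodes that $\frac{\A}{I} = \frac{\A_*^{\,*}}{({}^{\perp}I)^{\perp}}$, giving $I = ({}^{\perp}I)^{\perp}$, hence $I$ is weak*-closed.

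The main obstacle I anticipate is being careful with the "with respect to ${}^{\perp}I$" phrasing in (i): one must not merely show $\frac{\A}{I}$ is \emph{a} dual Banach algebra for \emph{some} predual, but that ${}^{\perp}I$ itself works — and conversely that is what lets (i)$\Rightarrow$(ii) go through, since a priori a quotient could be dual with respect to an unrelated predual without $I$ being $\omega^*$-closed. So the crux is the isometric identification $\frac{(\A_*)^*}{({}^{\perp}I)^{\perp}} \cong ({}^{\perp}I)^*$ together with pinning down that the submodule condition on ${}^{\perp}I$ in $\A_*$ is equivalent to separate $\omega^*$-continuity of the quotient multiplication; the module computations using that $I$ is two-sided are routine once the duality bookkeeping is set up correctly.
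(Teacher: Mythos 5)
Your proposal is correct and follows essentially the same route as the paper: both directions hinge on the identification $\frac{\A}{I}=\frac{\A}{({}^{\perp}I)^{\perp}}\cong({}^{\perp}I)^*$ via the standard quotient duality together with $({}^{\perp}I)^{\perp}=\overline{I}^{\omega^*}$, plus the routine check that the $\A$-module action on ${}^{\perp}I$ descends to $\frac{\A}{I}$ because $I$ is a two-sided ideal. Your extra care about the phrase ``with respect to ${}^{\perp}I$'' is exactly the point that makes (i)$\Rightarrow$(ii) work, and it matches what the paper (tersely) does.
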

\begin{proof}
(ii)$\Rightarrow$(i) Let $I$ be $\omega^*$- closed ideal in $\A$. Then  $$\frac{A}{I}=\frac{\A}{\overline{I}^{\omega^*}}=\frac{\A}{(^\perp I)^\perp}=(^\perp I)^*$$
For $x\in \A$ and $f\in \A_*$ we  define the module action of $\frac{A}{I}$ over $^\perp I$ by $(x+I).f=x.f$ and $f.(x+I)=f.x$. It is routine to check that this module actions are well defined and $\frac{\A}{I}.(^\perp I)\subseteq (^\perp I)$ and  $(^\perp I).\frac{\A}{I}\subseteq (^\perp I)$, so $\frac{\A}{I}$ is a  dual Banach algebra with respect to $^\perp I$.

   (ii)$\Rightarrow$(i). $\frac{A}{I}=(^\perp I)^*=\frac{\A}{(^\perp I)^\perp}=\frac{\A}{\overline{I}^{\omega^*}}$ thus $(\frac{A}{I})^*=(\frac{A}{\overline {I}^{\omega^*}})^*$. So $^\perp(I^\perp)=^\perp(\overline{I}^{\omega^*})^\perp$  and thus  $I=\overline{I}^{\omega^*}$
   \end{proof}
  In the next theorem  ${\A}$ is a $\mathcal{WAP}$-algebra, so we suppose that  ${\A}$ is a closed subalgebra of $\mathcal{WAP}({\A})^*$.
\begin{theorem}
Let ${\A}$ be a $\mathcal{WAP}$- algebra.
\begin{enumerate}
  \item[(i)]
 Let $I$ be a  $\sigma(\A,\mathcal{WAP}(\A))$ closed  ideal in ${\A}$, then $\frac{{\A}}{I}$  is a $\mathcal{WAP}$- algebra.

  \item[(ii)] Let ${\B}$ be a Banach algebra and $\varphi:{\A}\rightarrow{\B}$ be an onto
  % $\sigma({\A},\mathcal{WAP}({\A}))$- $\sigma({\B},\mathcal{WAP}({\B}))$
      continuous  homomorphism, then ${\B}$ is a $\mathcal{WAP}$- algebra.
\end{enumerate}
\end{theorem}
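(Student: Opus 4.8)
The plan is to realise $\A/I$ (and then $\B$) as a bounded-below image of a dual Banach algebra and invoke Proposition~\ref{equivalent}(iii). Throughout I use the normalisation adopted just before the theorem: since $\A$ is a $\mathcal{WAP}$-algebra, $\A$ is an isometrically embedded closed subalgebra of the dual Banach algebra $\mathcal{D}:=\mathcal{WAP}(\A)^{*}$ (with predual $\mathcal{WAP}(\A)$). Note that then, for $a\in\A$, the distance from $a$ to $I$ is the same whether computed in $\A$ or in $\mathcal{D}$, and that $B_{\A}$ is $\sigma(\mathcal{D},\mathcal{WAP}(\A))$-dense in $B_{\mathcal{D}}$ (because $\A$ is $1$-norming for $\mathcal{WAP}(\A)=\mathcal{D}_{*}$, by definition of the norm on $\A^{*}$, so the bipolar of $B_{\A}$ in $\mathcal{D}$ is all of $B_{\mathcal{D}}$).

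For (i): let $J$ be the $\sigma(\mathcal{D},\mathcal{WAP}(\A))$-closure of $I$ in $\mathcal{D}$. Separate $w^{*}$-continuity of the product on $\mathcal{D}$ makes $J$ a $w^{*}$-closed two-sided ideal, so by the quotient theorem for dual Banach algebras proved above, $\mathcal{D}/J$ is a dual Banach algebra (with predual ${}^{\perp}J\subseteq\mathcal{WAP}(\A)$), hence a $\mathcal{WAP}$-algebra. Since $\sigma(\A,\mathcal{WAP}(\A))$ is exactly the topology $\A$ inherits from $\sigma(\mathcal{D},\mathcal{WAP}(\A))$, we get $J\cap\A=\overline{I}^{\sigma(\A,\mathcal{WAP}(\A))}=I$, the last equality being the hypothesis on $I$. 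Hence the composite $\A\hookrightarrow\mathcal{D}\twoheadrightarrow\mathcal{D}/J$ has kernel exactly $I$ and factors as $\psi\circ q$, where $q\colon\A\to\A/I$ is the quotient map and $\psi\colon\A/I\to\mathcal{D}/J$ is an injective continuous homomorphism. By Proposition~\ref{equivalent}(iii) it then suffices to check that $\psi$ is bounded below.

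Bounding $\psi$ below is the heart of the matter. It helps first to record that $q^{*}$ carries $\mathcal{WAP}(\A/I)$ isometrically onto $\mathcal{WAP}(\A)\cap I^{\perp}$: one inclusion follows from $q^{*}(g)\cdot a=q^{*}(g\cdot q(a))$ together with weak–weak continuity of the bounded operator $q^{*}$; the other because $q$ is a metric surjection, so $\{g\cdot(a+I):\|a+I\|\le 1\}$ lies in the norm closure of $\{g\cdot q(a):\|a\|\le 1\}$, which $q^{*}$ identifies isometrically with a relatively weakly compact set. Consequently $\|a+I\|_{\mathcal{WAP}(\A/I)}=\sup\{|f(a)|:f\in\mathcal{WAP}(\A)\cap I^{\perp},\ \|f\|\le 1\}$, so $\psi$ is bounded below precisely when $\mathcal{WAP}(\A)\cap I^{\perp}$ is norming for $\A/I$ up to a constant — equivalently, when $\A+J$ is norm-closed in $\mathcal{D}$. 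The bipolar theorem for the pairing $(\A,\mathcal{WAP}(\A))$, combined with the $\sigma$-closedness of $I$, already gives for free that $\mathcal{WAP}(\A)\cap I^{\perp}$ separates the points of $\A/I$ and is weak$^{*}$-dense in $(\A/I)^{*}$; the remaining task is to promote this to the quantitative norming estimate, and here I would exploit that the embedding $\iota$ is isometric and that $B_{\A}$ is $w^{*}$-dense in $B_{\mathcal{D}}$ to compare the distances of $a\in\A$ to $J$ and to $I$ inside $\mathcal{D}$. This closedness/bounded-below step is the one I expect to be the main obstacle. Once it is established, $\psi$ is a bounded-below continuous homomorphism into the dual Banach algebra $\mathcal{D}/J$, and $\A/I$ is a $\mathcal{WAP}$-algebra by Proposition~\ref{equivalent}(iii).

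For (ii): by the open mapping theorem $\varphi$ is open, so $\ker\varphi$ is a closed ideal of $\A$ and $\varphi$ induces a Banach-algebra isomorphism $\A/\ker\varphi\cong\B$ with equivalent norms; by Corollary~\ref{equiva} it is enough to prove that $\A/\ker\varphi$ is a $\mathcal{WAP}$-algebra, which is exactly part (i) applied to the ideal $I=\ker\varphi$. The single extra point is that $\ker\varphi$ be $\sigma(\A,\mathcal{WAP}(\A))$-closed; this is where the hypotheses on $\varphi$ must enter — one uses that $\varphi^{*}$ is bounded below and $\varphi^{*}(\mathcal{WAP}(\B))\subseteq\mathcal{WAP}(\A)$, so that $\mathcal{WAP}(\A)\cap(\ker\varphi)^{\perp}$ is rich enough to recover $\ker\varphi$ as ${}^{\perp}\big(\mathcal{WAP}(\A)\cap(\ker\varphi)^{\perp}\big)$ — after which the argument of (i) applies verbatim, with the same norming estimate as the only real work.
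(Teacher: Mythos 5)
Your setup coincides with the paper's: you identify $\mathcal{WAP}(\A/I)$ with $\mathcal{WAP}(\A)\cap I^{\perp}$ via the adjoint of the quotient map, pass to the $w^{*}$-closure $J$ of $I$ in $\mathcal{WAP}(\A)^{*}$, invoke the quotient theorem for dual Banach algebras, and aim to conclude via Proposition \ref{equivalent}(iii). The problem is that the one step you yourself flag as ``the heart of the matter'' --- that the induced map $\psi\colon\A/I\to\mathcal{WAP}(\A)^{*}/J$ is bounded below, equivalently that $\mathcal{WAP}(\A)\cap I^{\perp}$ is norming (up to a constant) for $\A/I$ --- is never carried out; you only record that it should follow from comparing $\mathrm{dist}(a,I)$ with $\mathrm{dist}(a,J)$ and that you ``expect this to be the main obstacle.'' That estimate is precisely the content of the theorem: the bipolar argument you do give yields only that $\mathcal{WAP}(\A/I)$ separates the points of $\A/I$, and by Lemma \ref{Waptt} together with the proposition following it, point separation is strictly weaker than being a $\mathcal{WAP}$-algebra. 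So as written the proof is incomplete at exactly the quantitative step that distinguishes the conclusion from the easy qualitative one. (For what it is worth, the paper's own proof is equally terse here: it simply asserts that $\A/I$ sits as a closed subalgebra of $\mathcal{WAP}(\A)^{*}/\overline{I}^{\,w^{*}}\cong\mathcal{WAP}(\A/I)^{*}$, which is the same unproved norm estimate; you have correctly located where the real work lies, but locating it is not doing it.)

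Part (ii) has a second, independent gap. Like the paper, you reduce (ii) to (i) by taking $I=\ker\varphi$; but (i) applies only to $\sigma(\A,\mathcal{WAP}(\A))$-closed ideals, while the kernel of a merely continuous onto homomorphism is only norm closed. Your proposed remedy --- recovering $\ker\varphi$ as ${}^{\perp}\bigl(\mathcal{WAP}(\A)\cap(\ker\varphi)^{\perp}\bigr)$ from $\varphi^{*}(\mathcal{WAP}(\B))\subseteq\mathcal{WAP}(\A)$ --- requires $\mathcal{WAP}(\B)$ to separate the points of $\B$, which is (part of) the conclusion being proved; the argument is circular as it stands. Either the $\sigma(\A,\mathcal{WAP}(\A))$-closedness of $\ker\varphi$ must be obtained by other means, or part (ii) needs a hypothesis on $\varphi$ beyond continuity and surjectivity.
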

\begin{proof}
(i)Let $\pi$  be the canonical map from
${\A}$ onto ${\A}/I $. It is well known that $\pi^*:({\A}/I)^*\longrightarrow{\A}^*$ is  an isometric isomorphism  onto it's rang  $I^{\perp}$.
 We conclude from latter isometry that,  $\mathcal{WAP}({\A}/I)\cong \mathcal{WAP}({\A})\cap I^{\perp}$ is the predual of $I$ in  the dual Banach algebra $\mathcal{WAP}({\A})^*$. %By \cite[Proposition 4.1.15]{palmer}
 $ I$ is a $\omega^*$-closed ideal of $\mathcal{WAP}({\A})^*$ with respect to both Arens products.  So
 $$\mathcal{WAP}({\A})^*/{ I}\cong \mathcal{WAP}({\A}/I)^*$$ Since ${\A}$ is  a $\mathcal{WAP}$-algebra, then ${\A}/I$ is a  closed subalgebra of $\mathcal{WAP}({\A})^*/I$ and  so it is a  $\mathcal{WAP}$-algebra.

(ii) Let  $I=\ker(\varphi)$, then $I$ is  closed ideal in ${\A}$ and ${\A}/I\cong {\B}$
isometrically isomorphic.  Thus by (i) the Banach algebra ${\B}$ is a $\mathcal{WAP}$-algebra.
\end{proof}

Let ${\A}\oplus{\B}$ denote the direct sum of Banach algebras ${\A}$ and ${\B}$, and  endow it with coordinate wise  operation.
\begin{lemma}
\begin{enumerate}
  \item [(i)]If ${\A}$ and ${\B}$ are $\mathcal{WAP}$-algebras, then ${\A}\oplus{\B}$ is $\mathcal{WAP}$-algebras as well.
  \item [(ii)]If ${\A}$ and ${\B}$ are dual Banach algebras, then ${\A}\oplus{\B}$ is  dual Banach algebra as well.
  \item [(iii)]If ${\A}$ and ${\B}$ are Arens regular Banach algebras, then ${\A}\oplus{\B}$ is Arens regular as well.
\end{enumerate}
\end{lemma}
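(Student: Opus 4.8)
The plan is to treat the three parts essentially independently, exploiting the single structural fact that everything on $\A\oplus\B$ — the multiplication, the module actions, and the weak$^*$ topology — is coordinatewise, so that each object attached to $\A\oplus\B$ splits as the direct sum of the corresponding objects for $\A$ and $\B$. Throughout I fix an admissible norm, say $\|(a,b)\|=\max\{\|a\|,\|b\|\}$; by Corollary~\ref{equiva} the choice among the equivalent $\ell^p$-type norms is immaterial, and correspondingly $(\A\oplus\B)^*$ is identified with $\A^*\oplus\B^*$ (with the dual $\ell^1$-type norm) under $\langle(f,g),(a,b)\rangle=\langle f,a\rangle+\langle g,b\rangle$, and $(\A\oplus\B)^{**}$ with $\A^{**}\oplus\B^{**}$.

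For (ii): if $\A=(\A_*)^*$ and $\B=(\B_*)^*$ are dual Banach algebras, I claim $\A\oplus\B=(\A_*\oplus\B_*)^*$ is a dual Banach algebra with predual $\A_*\oplus\B_*$. The Banach-space identification is standard; what must be checked is that $\A_*\oplus\B_*$ is a closed $\A\oplus\B$-submodule of $(\A\oplus\B)^*=\A^*\oplus\B^*$. Since the module action is $(a,b)\cdot(f,g)=(a\cdot f,\,b\cdot g)$ and likewise on the other side, and $\A_*$, $\B_*$ are submodules of $\A^*$, $\B^*$, closure under the action and closedness of the subspace are both immediate. Equivalently, the product weak$^*$ topology on $\A\oplus\B$ is exactly $\sigma(\A\oplus\B,\A_*\oplus\B_*)$, and coordinatewise multiplication is separately continuous for it because each coordinate multiplication is.

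For (i): by Proposition~\ref{equivalent} choose bounded below continuous homomorphisms $\phi_\A\colon\A\to\B_1$, $\phi_\B\colon\B\to\B_2$ into dual Banach algebras $\B_1$, $\B_2$. By part (ii) the algebra $\B_1\oplus\B_2$ is a dual Banach algebra, and $\phi_\A\oplus\phi_\B\colon\A\oplus\B\to\B_1\oplus\B_2$ is a continuous homomorphism that is bounded below: if $c_i>0$ witnesses that $\phi_i$ is bounded below, then $\|(\phi_\A\oplus\phi_\B)(a,b)\|=\max\{\|\phi_\A(a)\|,\|\phi_\B(b)\|\}\ge\min\{c_\A,c_\B\}\,\|(a,b)\|$. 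Proposition~\ref{equivalent}(iii) then yields that $\A\oplus\B$ is a $\mathcal{WAP}$-algebra. (Alternatively, amplify reflexive isometric representations $\pi_\A\colon\A\to B(E)$, $\pi_\B\colon\B\to B(F)$ to a representation on the reflexive space $E\oplus F$.)

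For (iii): unravelling the definitions shows that both Arens products on $(\A\oplus\B)^{**}=\A^{**}\oplus\B^{**}$ are computed coordinatewise from those on $\A^{**}$ and $\B^{**}$, so if the two products agree on each factor they agree on the sum; equivalently one checks $\mathcal{WAP}(\A\oplus\B)=\mathcal{WAP}(\A)\oplus\mathcal{WAP}(\B)$, using that the orbit $\{(f,g)\cdot(a,b):\|(a,b)\|\le1\}$ is $\{f\cdot a:\|a\|\le1\}\times\{g\cdot b:\|b\|\le1\}$ and that a product of two relatively weakly compact sets is relatively weakly compact in the product space (while projections recover the reverse inclusion). Then $\A\oplus\B$ is Arens regular $\iff\mathcal{WAP}(\A\oplus\B)=(\A\oplus\B)^*\iff\mathcal{WAP}(\A)=\A^*$ and $\mathcal{WAP}(\B)=\B^*$. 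There is no deep difficulty anywhere here: the only points needing care are the bookkeeping of norms and dual pairings for the direct sum, and, in (iii), the invocation that finite products of relatively weakly compact sets are relatively weakly compact (equivalently, that the Arens products split on the bidual) — both standard.
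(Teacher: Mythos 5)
Your proof is correct. Note that the paper itself states this lemma without any proof at all (it is evidently regarded as routine), so there is no argument of the authors' to compare against; your write-up supplies the missing justification. Each of your three parts is sound: the identification $(\A\oplus\B)^*\cong\A^*\oplus\B^*$ with coordinatewise module actions makes (ii) immediate; your reduction of (i) to Proposition~\ref{equivalent}(iii) via the bounded below homomorphism $\phi_\A\oplus\phi_\B$ into the dual Banach algebra $\B_1\oplus\B_2$ is exactly in the spirit of how the paper uses that proposition for Corollaries~\ref{equiva} and~\ref{WAP-algebra}, and the estimate $\max\{\|\phi_\A(a)\|,\|\phi_\B(b)\|\}\geq\min\{c_\A,c_\B\}\max\{\|a\|,\|b\|\}$ is right; and for (iii) the identity $\mathcal{WAP}(\A\oplus\B)=\mathcal{WAP}(\A)\oplus\mathcal{WAP}(\B)$ (product of relatively weakly compact orbits, projections for the converse) combined with the criterion $\mathcal{WAP}(\A)=\A^*$ for Arens regularity settles the matter; this is the same mechanism the paper itself invokes, via an iterated-limit computation, in part (iii) of the theorem on complemented ideals that follows this lemma. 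No gaps.
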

\begin{corollary}Let $\A^\sharp$ be the unitization of $\A$. Then:
\begin{enumerate}
  \item [(i)] ${\A}$ is a $\mathcal{WAP}$-algebras if and only if ${\A}^\sharp$ is $\mathcal{WAP}$-algebras as well.
  \item [(ii)] ${\A}$ is a dual Banach algebras if and only if ${\A}^\sharp$ is  dual Banach algebra as well.
  \item [(iii)]  ${\A}$ is  Arens regular Banach algebras if and only if ${\A}^\sharp$ is Arens regular as well.
\end{enumerate}
\end{corollary}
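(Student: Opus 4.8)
The plan is to reduce all three equivalences to one computation of $\mathcal{WAP}(\A^\sharp)$. Realise $\A^\sharp=\A\oplus\mathbb{C}$ with the unitization norm $\|(a,\lambda)\|=\|a\|+|\lambda|$ and product $(a,\lambda)(b,\mu)=(ab+\lambda b+\mu a,\ \lambda\mu)$, so that $(\A^\sharp)^*=\A^*\oplus\mathbb{C}$, a functional $(g,c)$ acting by $(g,c)(a,\lambda)=g(a)+c\lambda$ and carrying the norm $\max(\|g\|,|c|)$. From the product formula one computes $(g,c)\cdot(a,\lambda)=(g\cdot a+\lambda g,\ g(a)+c\lambda)$, so the orbit $\{(g,c)\cdot(a,\lambda):\|(a,\lambda)\|\le1\}$ has bounded $\mathbb{C}$-component and has $\A^*$-component contained in the sum of $\{g\cdot a:\|a\|\le1\}$ and the norm-compact disc $\{\lambda g:|\lambda|\le1\}$. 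Since a subset of $\A^*\oplus\mathbb{C}$ is relatively weakly compact exactly when both of its coordinate projections are, I would deduce that $(g,c)\in\mathcal{WAP}(\A^\sharp)$ if and only if $g\in\mathcal{WAP}(\A)$; that is, $\mathcal{WAP}(\A^\sharp)=\mathcal{WAP}(\A)\oplus\mathbb{C}$, and maximising $|g(a)+c\lambda|$ over $\{(g,c):g\in\mathcal{WAP}(\A),\ \|g\|\le1,\ |c|\le1\}$ gives $\|(a,\lambda)\|_{\mathcal{WAP}(\A^\sharp)}=\|a\|_{\mathcal{WAP}(\A)}+|\lambda|$.

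Parts (iii) and (i) then drop out. For (iii): $\A^\sharp$ is Arens regular iff $\mathcal{WAP}(\A^\sharp)=(\A^\sharp)^*$ iff $\mathcal{WAP}(\A)\oplus\mathbb{C}=\A^*\oplus\mathbb{C}$ iff $\mathcal{WAP}(\A)=\A^*$, and the last condition is Arens regularity of $\A$ by the criterion recalled in the Introduction. For (i): $\A$ is a $\mathcal{WAP}$-algebra precisely when the (always valid) inequality $\|\cdot\|_{\mathcal{WAP}(\A)}\le\|\cdot\|$ is matched by a bound $\|\cdot\|_{\mathcal{WAP}(\A)}\ge c\|\cdot\|$ with $c>0$; by the norm identity the analogous bound on $\A^\sharp$, namely $\|a\|_{\mathcal{WAP}(\A)}+|\lambda|\ge c(\|a\|+|\lambda|)$, holds for some $c>0$ if and only if the bound on $\A$ does, so $\A^\sharp$ is a $\mathcal{WAP}$-algebra iff $\A$ is. Alternatively, (i) needs no norm identity: ``$\Leftarrow$'' is Corollary \ref{WAP-algebra}, since $a\mapsto(a,0)$ embeds $\A$ as a closed subalgebra of $\A^\sharp$, and ``$\Rightarrow$'' follows from Proposition \ref{equivalent}(iii) applied to the unitised homomorphism $\phi^\sharp\colon\A^\sharp\to\B^\sharp$, $(a,\lambda)\mapsto(\phi(a),\lambda)$, which is bounded below whenever $\phi\colon\A\to\B$ is, provided $\B^\sharp$ is a dual Banach algebra --- and that is exactly the forward implication of (ii).

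It remains to treat (ii), where only the direction ``$\Leftarrow$'' is delicate. For ``$\Rightarrow$'', if $\A=(\A_*)^*$ I would equip $\A^\sharp=\A\oplus\mathbb{C}$ with the candidate predual $\A_*\oplus\mathbb{C}$: the product of $\A^\sharp$ is built from $(a,b)\mapsto ab$ (separately $w^*$-continuous because $\A$ is a dual Banach algebra), from $(b,\lambda)\mapsto\lambda b$ and $(a,\mu)\mapsto\mu a$ (norm continuous), and from $(\lambda,\mu)\mapsto\lambda\mu$, so one checks directly that $\A_*\oplus\mathbb{C}$ is a closed submodule of $(\A^\sharp)^*$ and hence that $\A^\sharp$ is a dual Banach algebra. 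The direction ``$\Leftarrow$'' is the step I expect to be the main obstacle. Inside $\A^\sharp$ the algebra $\A$ is the kernel of the augmentation character $q(a,\lambda)=\lambda$, a closed codimension-one ideal; a closed ideal of a dual Banach algebra is again a dual Banach algebra with respect to its own preannihilator precisely when it is $w^*$-closed, and here $\A$ is $w^*$-closed if and only if $q$ is $w^*$-continuous, i.e. if and only if $q$ lies in the chosen predual of $\A^\sharp$. Since characters of a dual Banach algebra need not be $w^*$-continuous, this must be established rather than assumed; the natural attempt is to test $q$ against the identity of $\A^\sharp$ and against bounded nets from $\A$ in order to exhibit a representing element of $q$ in the predual, and settling whether this always works --- or whether ``$\Leftarrow$'' in (ii) requires an additional hypothesis, such as taking $\A$ with the predual it inherits from $\A^\sharp$ --- is the crux.
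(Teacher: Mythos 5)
Your computation that $\mathcal{WAP}(\A^\sharp)=\mathcal{WAP}(\A)\oplus\mathbb{C}$, together with the resulting norm identity $\|(a,\lambda)\|_{\mathcal{WAP}(\A^\sharp)}=\|a\|_{\mathcal{WAP}(\A)}+|\lambda|$, is correct and settles (i), (iii) and the forward half of (ii) completely. This is genuinely different from --- and substantially more than --- what the paper does: the paper gives no proof of this corollary at all, and the only plausible implicit justification, the preceding lemma on direct sums, does not apply as stated, because that lemma concerns $\A\oplus\B$ with \emph{coordinatewise} multiplication, whereas the unitization carries the product $(a,\lambda)(b,\mu)=(ab+\lambda b+\mu a,\lambda\mu)$. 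Your module-action formula $(g,c)\cdot(a,\lambda)=(g\cdot a+\lambda g,\ g(a)+c\lambda)$ is exactly the computation needed to bridge that gap, and the observation that the extra terms perturb the orbit of $g$ only by a norm-compact set (so relative weak compactness is unaffected) is the right reason why weak almost periodicity passes between $\A$ and $\A^\sharp$.

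The one place where you stop short --- by your own admission --- is the backward direction of (ii), and you are right to be suspicious rather than to wave it through: the paper offers nothing here either, and the obstruction you identify is real. Inside $\A^\sharp$ the algebra $\A$ is the kernel of the augmentation character $q$, and a hyperplane $\ker q$ in a dual space is $w^*$-closed if and only if $q$ is $w^*$-continuous, which need not hold for characters of a dual Banach algebra. A concrete warning sign: $\ell^\infty=(\ell^1)^*$ with pointwise product is a unital dual Banach algebra, and for a free ultrafilter $p\in\beta\mathbb{N}\setminus\mathbb{N}$ the evaluation character $\chi_p$ annihilates the $w^*$-dense ideal $c_0$ and so is not $w^*$-continuous; its kernel $\A=\ker\chi_p$ satisfies $\A^\sharp\cong\ell^\infty$ as algebras with equivalent norms, yet $\A$ is a non-unital commutative $C^*$-algebra and hence, by Sakai's theorem, is not even isometrically a dual Banach space. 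So the implication ``$\A^\sharp$ dual $\Rightarrow$ $\A$ dual'' is at best sensitive to the isometric-versus-isomorphic convention for preduals and at worst false; it cannot be obtained by the $w^*$-closed-ideal mechanism, and neither you nor the paper supplies an argument for it. Everything else in your proposal is sound.
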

\begin{theorem}
Let ${\A}$ be a Banach algebra and  $I$ is  a  closed complemented ideal in ${\A}$.
 If $I$ and $\frac{{\A}}{I}$ are dual Banach algebra [respectively, $\mathcal{WAP}$-algebra, Arens regular Banach algebra ], then  ${\A}$ is a dual Banach  algebra [respectively, $\mathcal{WAP}$-algebra, Arens regular Banach algebra ].
\end{theorem}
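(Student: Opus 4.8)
The plan is to treat the three assertions in parallel and to reduce each of them to the behaviour of the direct sum $I\oplus(\A/I)$ already recorded in the preceding lemma on direct sums, exploiting the Banach-space splitting that the complementation supplies. Fix a bounded projection $P\colon\A\to I$ with kernel $J$, so that $\A=I\oplus J$ as a topological direct sum; then $q\colon\A\to\A/I$ restricts to a topological isomorphism of $J$ onto $\A/I$, and I write $\rho\colon\A/I\to\A$ for the corresponding bounded linear section, so that $q\rho=\mathrm{id}_{\A/I}$ and $\rho q=\mathrm{id}_\A-P$. Observe first that, as $q$ is a continuous homomorphism onto $\A/I$, the computation used for Proposition~\ref{equivalent}, (iii)$\Rightarrow$(i), gives $q^*(\mathcal{WAP}(\A/I))\subseteq\mathcal{WAP}(\A)$; thus the hypothesis on $\A/I$ always controls $\A$ modulo $I$, and the issue throughout is to control $\A$ along $I$.

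For the $\mathcal{WAP}$-algebra case I would argue through Proposition~\ref{equivalent}(iii). Choose bounded-below continuous homomorphisms $\psi\colon I\to\mathcal D_1$ and $\chi\colon\A/I\to\mathcal D_2$ into dual Banach algebras; the target will be $\mathcal D_1'\oplus\mathcal D_2$, a dual Banach algebra by the direct-sum lemma, and the second coordinate of the sought homomorphism $\A\to\mathcal D_1'\oplus\mathcal D_2$ is to be $\chi\circ q$. Since $\|\chi(q(a))\|\gtrsim\|q(a)\|=\operatorname{dist}(a,I)$ and $\|a\|\lesssim\|Pa\|+\|q(a)\|$, it then suffices to find a continuous homomorphism $\Theta_1\colon\A\to\mathcal D_1'$ that is bounded below on $I$. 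Here I would use that $I$ is a two-sided ideal, so $a\mapsto(L_a,R_a)$ is a continuous homomorphism of $\A$ into the multiplier algebra of $I$; transporting this through the injective map $\psi$ yields a homomorphism of $\A$ into the multiplier algebra of the closed subalgebra $\overline{\psi(I)}$ of $\mathcal D_1$ which extends $\psi$. One then has to realize this multiplier algebra inside a dual Banach algebra (or $\mathcal{WAP}$-algebra) $\mathcal D_1'$ and check that the extension is still bounded below on $I$; granting this, $(\Theta_1,\chi\circ q)$ exhibits $\A$ as a closed subalgebra of a dual Banach algebra, and Corollary~\ref{WAP-algebra} finishes the case.

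For the dual-Banach-algebra case I would build the predual directly. Set $\A_*:=\{f\in\A^*:f|_I\in I_*\ \text{and}\ f\circ\rho\in(\A/I)_*\}$, a closed subspace of $\A^*$. Using the decomposition $\A^*=J^\perp\oplus I^\perp$, in which restriction to $I$ identifies $J^\perp$ with $I^*$ and $\rho^*$ identifies $I^\perp$ with $(\A/I)^*$, the natural map $\A\to(\A_*)^*$ is an isometric isomorphism, since $(\A_*)^*\cong(I_*)^*\oplus((\A/I)_*)^*\cong I\oplus(\A/I)\cong\A$. What remains is to verify that $\A_*$ is a closed $\A$-submodule of $\A^*$; the part concerning the $(\A/I)_*$-component is exactly the statement, already used in the quotient theorem for dual Banach algebras, that $q^*$ carries $(\A/I)_*$ into $I^\perp$ as a bimodule map. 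For the Arens-regular case I would use that $\A$ is Arens regular precisely when $\mathcal{WAP}(\A)=\A^*$, together with Grothendieck's iterated-limit criterion: for bounded nets $(a_\alpha),(b_\beta)$ in $\A$ and $f\in\A^*$, after the routine passage to subnets one expands $f(a_\alpha b_\beta)=f|_I\big(P(a_\alpha b_\beta)\big)+g\big(q(a_\alpha)q(b_\beta)\big)$ with $g=f\circ\rho\in(\A/I)^*$, and the second summand has symmetric iterated limits by Arens regularity of $\A/I$.

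In all three cases what is left over is the same, and it is where I expect the real work to be and where the complementation hypothesis must be used in an essential way: the \emph{off-diagonal} multiplications $I\times J\to I$ and $J\times J\to\A$. Equivalently, one must show that the closed ideal $I$ is a $\mathcal{WAP}$- (respectively dual-, respectively Arens-regular) object \emph{as an $\A$-bimodule}, not merely as an algebra over itself. Concretely this amounts to checking that $I_*$ is invariant under the full $\A$-action on $I^*$ and not only under the $I$-action; or that a bounded-below homomorphism of $I$ extends to one of $\A$; or that the iterated limits of $f|_I\big(P(a_\alpha b_\beta)\big)$ remain symmetric. This is the main obstacle; it is the step where, for instance, a bounded approximate identity on $I$ would let one extend representations from $I$ to $\A$ and thereby short-circuit the argument, and in its absence one must work harder. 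Once it is settled, Corollary~\ref{WAP-algebra}, the direct-sum lemma, and the quotient results of this section dispatch all three implications simultaneously.
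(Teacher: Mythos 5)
Your proposal is not a complete proof, and you say so yourself: everything is reduced to the ``off-diagonal'' problem --- showing that the products $I\cdot J$ and $J\cdot J$ (equivalently, the $\A$-bimodule structure of $I$, as opposed to its structure as an algebra over itself) are compatible with the dual / $\mathcal{WAP}$ / Arens-regular structure --- and that problem is left open. Concretely, none of the three closing steps is supplied: (a) in the $\mathcal{WAP}$ case you still need a continuous homomorphism $\Theta_1$ of $\A$ into a dual Banach algebra that is bounded below on $I$; the multiplier-algebra route you sketch requires $I$ to act faithfully on itself (for instance via a bounded approximate identity) for $a\mapsto(L_a,R_a)$ to be bounded below on $I$, and even then one must show that the multiplier algebra of $\overline{\psi(I)}$ embeds into a dual Banach algebra, which is not automatic; (b) in the dual case the invariance of the $I_*$-component of your $\A_*$ under the full $\A$-action is exactly the unproved claim that $I_*$ is an $\A$-submodule of $I^*$ and not merely an $I$-submodule; (c) in the Arens-regular case the symmetry of the iterated limits of $f|_I\bigl(P(a_\alpha b_\beta)\bigr)$ does not follow from weak almost periodicity of $f|_I$ over $I$, because $P(a_\alpha b_\beta)$ contains the cross terms $P(a_\alpha)(1-P)(b_\beta)$ and $(1-P)(a_\alpha)P(b_\beta)$ whose factors lie outside $I$. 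So the proposal locates the difficulty accurately but does not overcome it.

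It is only fair to add that the paper's own proof does not overcome it either: the paper identifies $\A$ with the coordinatewise direct sum $I\oplus(\A/I)$ (writing $a=(x,y)$ with $x\in I$, $y\in\A/I$ and multiplying coordinatewise) and then in effect invokes the direct-sum lemma in each of the three cases. That identification is a Banach-space isomorphism, courtesy of the complementation, but it is an algebra isomorphism only when the chosen complement $J$ is itself an ideal killing the cross terms; for a general complemented ideal the product of $a=x+j$ and $a'=x'+j'$ has $I$-component $xx'+xj'+jx'+P(jj')$, not $xx'$ (already for the upper-triangular $2\times 2$ matrices with $I$ the strictly upper-triangular part the coordinatewise formula fails). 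So the paper assumes away precisely the obstacle you isolate, and its stated bimodule action $a\cdot\mu=(x\cdot\mu_1,y\cdot\mu_2)$ is not the one induced by the actual multiplication of $\A$. In short: your diagnosis of where the work lies is more candid than the paper's treatment, but as written your argument, like the paper's, establishes the theorem only under the stronger hypothesis that $I$ is complemented by a closed \emph{ideal}, in which case $\A\cong I\oplus(\A/I)$ as algebras and the direct-sum lemma applies directly.
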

\begin{proof}
(i): Let $I=E^*$ (respectively $\frac{{\A}}{I}=F^*$) where $E$ (respectively $F$) is an $I$-bimodule (respectively $\frac{{\A}}{I}$-bimodule).   Suppose that ${\A}_*:=E\oplus F$.   Then ${\A}_*^*\cong(E\oplus F)^*\cong E^*\oplus F^*\cong I\oplus \frac{{\A}}{I}\cong {\A}$. We can regard ${\A}_*$ as  an ${\A}$-bimodule by $$a.\mu:=(x.\mu_1,y.\mu_2)\quad and\quad \mu.a:=(\mu_1.x,\mu_2.y)$$ where $a=(x,y) \quad and\quad \mu=(\mu_1,\mu_2), x\in I,y\in \frac{{\A}}{I},\mu_1\in E,\mu_2\in F$.

(ii):Suppose that  $I$ is a closed subalgebra of $\mathcal{WAP}(I)^*$ and $\frac{{\A}}{I}$ is closed subalgebra of $\mathcal{WAP}({\A})^*/{ \bar I^{\omega^*}}\cong \mathcal{WAP}({\A}/I)^*$ then ${\A}\cong I\oplus \frac{{\A}}{I}$ is a closed subalgebra of $\mathcal{WAP}(I)^*\oplus \mathcal{WAP}({\A})^*/{\bar I^{\omega^*}}$. So ${\A}$  is a $\mathcal{WAP}$-algebra.

(iii): Suppose that  $\mathcal{WAP}(I)=I^*$   and  $\mathcal{WAP}({\A}/I)\cong ({\A}/I)^* $.  Then ${\A}^*\cong(I\oplus \frac{{\A}}{I})^*\cong I^*\oplus (\frac{{\A}}{I})^*\cong \mathcal{WAP}(I)\oplus \mathcal{WAP}(\frac{{\A}}{I})\cong \mathcal{WAP}({\A})$. The last equality is a result of  iterate limit argument. Let $F\in \mathcal{WAP}(I)\oplus \mathcal{WAP}(\frac{{\A}}{I})$ and $(x_n,y_n)$ ,$(a_m,b_m)$ are sequences in unit ball of $I\oplus \frac{{\A}}{I}$. Then $F=(f_1,f_2)$ such that $f_1\in \mathcal{WAP}(I)$ and $f_2\in \mathcal{WAP}(\frac{{\A}}{I})$. So that
\begin{eqnarray*}
% \nonumber to remove numbering (before each equation)
  \lim_n\lim_m F(x_n,y_n).(a_m,b_m) &=& (\lim_n\lim_mf_1(x_na_m),\lim_n\lim_mf_2(y_nb_m)) \\
   &=&(\lim_m\lim_nf_1(x_na_m),\lim_m\lim_nf_2(y_nb_m)) \\
   &=& \lim_m\lim_n F(x_n,y_n).(a_m,b_m)
\end{eqnarray*} and $F\in \mathcal{WAP}({\A})$.
\end{proof}

Let $X$ and $Y$ be Banach spaces  and $T: X^*\rightarrow Y^*$ be a bounded operator. Recall that  the following conditions are equivalent:
\begin{enumerate}
    \item [(i)]$T=S^*$, for some bounded  operator $S:Y\rightarrow X$.
    \item [(ii)]$T^*(Y)\subseteq X$.
    \item [(iii)]$T$ is $\omega^*$-$\omega^*$-continuous operator.
\end{enumerate}

 We now state  the main result of this section.
\begin{theorem}\label{rty} Let $\A$ be a Banach algebra. Then the
following statements are equivalent:
\begin{enumerate}
 \item [(i)]$\A$ is  a Arens regular Banach algebra.
    \item [(ii)]$(\A^{**},\square)$ is a dual Banach algebra.
    \item [(iii)]$(\A^{**},\lozenge)$ is a dual Banach algebra.
    \item [(iv)]$(\A^{**},\square)$ is a $\mathcal{WAP}$-algebra.
     \item [(v)]$(\A^{**},\lozenge)$ is a $\mathcal{WAP}$-algebra.
     \item[(vi)]$\A$ is  a $\mathcal{WAP}$- Banach algebra and $\mathcal{WAP}(\A)$ is a $\omega^*$-closed set in $\A^*$.
\end{enumerate}
\end{theorem}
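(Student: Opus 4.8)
The plan is to prove the chain of equivalences by first showing (i)$\Leftrightarrow$(ii)$\Leftrightarrow$(iii), then inserting the $\mathcal{WAP}$-conditions (iv), (v) between them, and finally linking (vi) to the rest. The hinge of the whole argument is the classical fact, recorded just before the theorem, that a bounded operator $T\colon X^*\to Y^*$ is $w^*$-$w^*$-continuous precisely when it is the adjoint of an operator $Y\to X$, equivalently when $T^*(Y)\subseteq X$. Applying this with $X=Y=\A^*$ and $T$ a left or right multiplication operator on $\A^{**}$ turns "separate $w^*$-continuity of the Arens product" into a statement about whether the relevant adjoint maps send $\A^*$ back into itself.

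First I would prove (i)$\Rightarrow$(ii): if $\A$ is Arens regular, then $\A^{**}$ with either (equal) Arens product has $\A^*$ as a closed submodule, and one checks using the iterated-limit characterization of $\mathcal{WAP}(\A)=\A^*$ that for fixed $\Phi\in\A^{**}$ the maps $\Psi\mapsto\Phi\square\Psi$ and $\Psi\mapsto\Psi\square\Phi$ are $w^*$-$w^*$-continuous; hence $(\A^{**},\square)$ is a dual Banach algebra with predual $\A^*$. By symmetry (i)$\Rightarrow$(iii) as well. The converse (ii)$\Rightarrow$(i): if $(\A^{**},\square)$ is a dual Banach algebra with respect to some predual, one shows that predual must in fact be (a copy of) $\A^*$ sitting inside $\A^{***}$ — here I would invoke that $\A$ is $w^*$-dense in $\A^{**}$ and that $\A^{*}$ is the canonical predual candidate — and then separate $w^*$-continuity forces the two Arens products to agree on $\A\times\A$, whence on all of $\A^{**}\times\A^{**}$ by density, giving Arens regularity. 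The same works for (iii)$\Rightarrow$(i).

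Next, (ii)$\Rightarrow$(iv) and (iii)$\Rightarrow$(v) are immediate from the already-proved implication that every dual Banach algebra is a $\mathcal{WAP}$-algebra (the remark after \cite{Bkh}, used in Proposition~\ref{EEE}(i)$\Rightarrow$(ii)). For the reverse direction (iv)$\Rightarrow$(i): since $(\A^{**},\square)$ is a $\mathcal{WAP}$-algebra, $\mathcal{WAP}(\A^{**},\square)$ separates the points of $\A^{**}$, and in particular its restriction to $\A$ lands in $\mathcal{WAP}(\A)$ and still separates points of $\A$; combining this with a dual-module argument one pushes up to conclude $\mathcal{WAP}(\A)=\A^*$, i.e.\ Arens regularity. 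This last step is where I expect the main obstacle: identifying exactly which functionals on $\A^{**}$ are weakly almost periodic and relating $\mathcal{WAP}(\A^{**},\square)$ to $\mathcal{WAP}(\A)$ requires care, because $\A^{***}$ decomposes as $\A^*\oplus\A^\perp$ and one must control the $\A^\perp$ part. I would handle it by using Young's theorem (\cite[Theorem 1]{y1}) together with Lemma~\ref{Waptt} to reduce "$\mathcal{WAP}$-algebra" to "$\mathcal{WAP}$ separates points and the two norms are equivalent," and then running the iterated-limit test directly.

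Finally, for (vi): (i)$\Rightarrow$(vi) is clear since Arens regularity gives $\mathcal{WAP}(\A)=\A^*$, which is trivially $w^*$-closed, and $\A$ is then a $\mathcal{WAP}$-algebra by Proposition~\ref{equivalent}. For (vi)$\Rightarrow$(i): if $\A$ is a $\mathcal{WAP}$-algebra then by Lemma~\ref{Waptt} $\mathcal{WAP}(\A)$ is $w^*$-dense in $\A^*$; assuming it is also $w^*$-closed forces $\mathcal{WAP}(\A)=\A^*$, which is exactly Arens regularity. Assembling all the implications — (i)$\Leftrightarrow$(ii)$\Leftrightarrow$(iii), (ii)$\Rightarrow$(iv)$\Rightarrow$(i), (iii)$\Rightarrow$(v)$\Rightarrow$(i), (i)$\Leftrightarrow$(vi) — closes the equivalence.
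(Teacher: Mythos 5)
Your outer skeleton (the easy implications (i)$\Rightarrow$(ii),(iii), ``dual $\Rightarrow$ $\mathcal{WAP}$-algebra'' for (ii)$\Rightarrow$(iv) and (iii)$\Rightarrow$(v), and the (i)$\Leftrightarrow$(vi) step via Lemma~\ref{Waptt}) agrees with the paper, but both of the implications that actually close the cycle have genuine gaps. First, for (ii)$\Rightarrow$(i) you assert that the predual witnessing the duality of $(\A^{**},\square)$ ``must in fact be (a copy of) $\A^*$,'' and you give no argument; the paper explicitly warns in the introduction that preduals of dual Banach algebras need not be unique, and nothing in the hypothesis ties the given predual to $\A^*$. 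If the predual is some other subspace of $\A^{***}$, separate $w^*$-continuity is continuity for $\sigma(\A^{**},\text{predual})$, which tells you nothing about interchanging iterated $\sigma(\A^{**},\A^*)$-limits of nets from $\A$, so your density argument does not get started. The paper never proves (ii)$\Rightarrow$(i) directly; it routes (ii)$\Rightarrow$(iv)$\Rightarrow\cdots\Rightarrow$(i), precisely to avoid this issue.

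Second, your (iv)$\Rightarrow$(i) is the real missing idea. Knowing that $\mathcal{WAP}(\A^{**})$ restricts into $\mathcal{WAP}(\A)$ and separates the points of $\A$ is far too weak to yield $\mathcal{WAP}(\A)=\A^*$: for a non-discrete group $G$, the algebra $L^1(G)$ has $\mathcal{WAP}(L^1(G))$ separating points (it is a $\mathcal{WAP}$-algebra) yet is not Arens regular, so no ``dual-module argument'' can promote separation of points to equality with $\A^*$ without using something specific about second duals. The paper's mechanism is different and is the heart of its proof: take a bounded-below representation $\pi$ of $(\A^{**},\square)$ on a reflexive space $E$; by \cite[Proposition 4.2]{Da2} such a $\pi$ is automatically $\omega^*$--$\omega^*$-continuous, and since every $\tau\in E\hat\otimes E^*$ lies in $\mathcal{WAP}(B(E))$ by Young's theorem, the iterated limits $\lim_\alpha\lim_\beta\langle\tau,\pi(a_\alpha)\pi(b_\beta)\rangle$ may be interchanged; hence $\pi(\Phi\square\Psi)=\pi(\Phi\lozenge\Psi)$ for all $\Phi,\Psi$, and injectivity of $\pi$ forces $\square=\lozenge$. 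This one computation simultaneously gives (iv)$\Leftrightarrow$(v) and (v)$\Rightarrow$(i) and is what your proposal lacks; your closing sentence ``running the iterated-limit test directly'' gestures at it but does not supply the two ingredients that make it work, namely the automatic $w^*$-continuity of $\pi$ and the injectivity that transfers the identity $\pi(\Phi\square\Psi)=\pi(\Phi\lozenge\Psi)$ back to $\A^{**}$.
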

\begin{proof}
  If $\A$ is Arens regular, then by \cite[Proposition 1.2]{ARV} $\A^{**}$ is dual Banach algebra  so(ii) and  (iii) are equivalent. By \cite[Theorem 4.10]{kas} there is a reflexive Banach space $E$  such that $\A$ and also $\A^{**}$ are closed subalgebras of $B(E)$. Hence $\A^{**}$ is  a $\mathcal{WAP}$-algebra. Thus (i)$\Rightarrow$ (ii)$\Leftrightarrow$ (iii)$\Rightarrow$  (iv) were established.

  We show that if $\A^{**}$ with either Arens product has a  representation  on a reflexive Banach space  then it is a  representation with another Arens product. Thus the equality (iv)$\Leftrightarrow$ (v) was  established.
 Let $\pi: (\A^{**},\square)\rightarrow B(E)$  be an isometric representation on a reflexive Banach space $E$ (See  corollary\ref{equiva} and explanations after it).  By \cite[Proposition 4.2]{Da2} $\pi$ is $\omega^*$-$\omega^*$-continuous.
% Since $B(E)=(E\hat\otimes E^*)^*$ is a dual Banach algebra, we have $\pi^*(E\hat\otimes E^*)\subseteq \pi^*(\mathcal{WAP}(B(E)))\subseteq \mathcal{WAP}(\A)\subseteq \A^*$  so
  Let $\Phi,\Psi\in \A^{**}$ and $(a_{\alpha})$ and $(b_{\beta})$ be nets in $\A$  that  $\omega^*$-convergening to  $\Phi$ and $\Psi$, respectively. Then for each $\tau\in E\hat\otimes E^*$
  \begin{eqnarray*}
  % \nonumber to remove numbering (before each equation)
    <\tau,\pi(\Phi\square\Psi)> &=& \lim_{\alpha}\lim_{\beta}<\tau,\pi(a_{\alpha})\circ\pi(b_{\beta})> \\
    &=& \lim_{\beta}\lim_{\alpha}<\tau,\pi(a_{\alpha})\circ\pi(b_{\beta})> \quad\quad (\mbox {since  }  \tau\in \mathcal{WAP}(B(E)))\\
     &=& <\tau,\pi(\Phi\lozenge\Psi)>
  \end{eqnarray*}
  So $\pi: (\A^{**},\lozenge)\rightarrow B(E)$ is an isometric  representation. Similarly for another  Arens product.

(v)$\Rightarrow$ (i)Let $(\A^{**},\lozenge)$ is  a $\mathcal{WAP}$-algebra. Then
there is an isometric representation $\pi: (\A^{**},\lozenge)\rightarrow B(E)$, for some reflexive Banach space $E$.
Hence by above equalities,
$$\pi(\Psi\lozenge\Phi)=\pi(\Psi)\circ\pi(\Phi)=\pi(\Psi\square\Phi)\quad(\Phi,\Psi\in
\A^{**})$$
 Since  $\pi$ is injective, we conclude
  $\A$ is Arerns regular Banach
algebra.

(vi)$\Rightarrow$ (i) Let $\A$ be a $\mathcal{WAP}$-algebra and $\mathcal{WAP}(\A)$ be a $\omega^*$-closed set in $\A^*$.  By lemma \ref{Waptt} $\overline {\mathcal{WAP}(\A)}^{\sigma(\A^*,\A)}=\A^*={\mathcal{WAP}(\A)}$. 

(i)$\Rightarrow$ (vi) Let  $\A$ be a a regular Banach algebra. Since $\A$ is a closed subset of $\A^{**}$,  $\A$ is a $\mathcal{WAP}$-algebra.  and $\overline {\mathcal{WAP}(\A)}^{\sigma(\A^*,\A)}=\A^*={\mathcal{WAP}(\A)}$. So
${\mathcal{WAP}(\A)}$ is a $\omega^*$-closed subset of $\A^*$.
\end{proof}
\begin{proposition}\label{ttt}
 $\A^{**}$ is a Arens regular Banach algebra if and only if $\A$ is a Arens regular Banach algebra and ${\mathcal{WAP}(\A^{**})}={\mathcal{WAP}(\A)}^{**}$.
\end{proposition}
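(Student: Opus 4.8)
The plan is to prove the two implications separately, using Theorem \ref{rty} to reduce the dual-Banach-algebra/Arens-regularity conditions to statements about weakly almost periodic functionals. For the forward direction, suppose $\A^{**}$ is Arens regular. First I would note that $\A$ is then Arens regular as well: Arens regularity passes to closed subalgebras (since $\mathcal{WAP}$ behaves well under restriction, as used repeatedly above for subalgebras of dual Banach algebras), and $\A$ sits as a closed subalgebra of $\A^{**}$. Next, since $\A$ is Arens regular we have $\mathcal{WAP}(\A)=\A^*$, so $\mathcal{WAP}(\A)^{**}=\A^{***}$. On the other hand, Arens regularity of $\A^{**}$ gives $\mathcal{WAP}(\A^{**})=(\A^{**})^*=\A^{***}$. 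Hence $\mathcal{WAP}(\A^{**})=\mathcal{WAP}(\A)^{**}$, with both sides equal to $\A^{***}$; here the identification of $\mathcal{WAP}(\A)^{**}$ as a subspace of $(\A^{**})^*$ is via the canonical embedding, which is the identity when $\mathcal{WAP}(\A)=\A^*$.

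For the converse, assume $\A$ is Arens regular and $\mathcal{WAP}(\A^{**})=\mathcal{WAP}(\A)^{**}$. Again Arens regularity of $\A$ gives $\mathcal{WAP}(\A)=\A^*$, so the hypothesis reads $\mathcal{WAP}(\A^{**})=\A^{***}=(\A^{**})^*$, which is precisely the statement that $\A^{**}$ is Arens regular (recall $\mathcal{B}$ is Arens regular iff $\mathcal{WAP}(\mathcal{B})=\mathcal{B}^*$, as recorded in the Introduction). That finishes this direction without further work, since $\mathcal{WAP}(\A)^{**}$ is literally $\A^{***}$ under the canonical identification once $\mathcal{WAP}(\A)=\A^*$.

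The one point that requires genuine care—and which I expect to be the main obstacle—is making precise the meaning of the equality $\mathcal{WAP}(\A^{**})=\mathcal{WAP}(\A)^{**}$ as subsets of $(\A^{**})^*$: one must fix the natural embedding of $\mathcal{WAP}(\A)^{**}$ into $\A^{***}=(\A^{**})^*$ and check it is compatible with the Arens structure, so that the claimed identity is not merely an abstract isometric isomorphism but an honest equality of subspaces. Once $\A$ is Arens regular this is transparent because $\mathcal{WAP}(\A)=\A^*$ collapses $\mathcal{WAP}(\A)^{**}$ onto $\A^{***}$ itself; but without that reduction the statement would be ambiguous, which is exactly why Arens regularity of $\A$ is included as a standing hypothesis on both sides. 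I would spell this identification out explicitly at the start of the proof and then the two implications are each a one-line consequence of the characterization ``Arens regular $\iff$ $\mathcal{WAP}$ equals the whole dual'' applied to $\A$ and to $\A^{**}$.
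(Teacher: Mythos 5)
Your proposal is correct and follows essentially the same route as the paper's own proof: deduce Arens regularity of $\A$ from that of $\A^{**}$ via the closed-subalgebra property, then use the characterization $\mathcal{WAP}(\mathcal{B})=\mathcal{B}^*$ in both directions to identify everything with $\A^{***}$. Your added remark about fixing the embedding of $\mathcal{WAP}(\A)^{**}$ into $(\A^{**})^*$ is a sensible clarification that the paper leaves implicit, but it does not change the argument.
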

\begin{proof}
Let  $\A^{**}$ be a Arens regular Banach algebra, then  $\mathcal{WAP}(\A^{**})=\A^{***}$. Also $\A$ is a closed subalgebra of $\A^{**}$ so it is  Arens regular Banach algebra. Thus  $\mathcal{WAP}(\A)=\A^*$ and so  $\mathcal{WAP}(\A^{**})={\mathcal{WAP}(\A)}^{**}$.

Conversely,  let $\A$ be a Arens regular Banach algebra then  ${\mathcal{WAP}(\A)}=\A^*$. Thus  ${\mathcal{WAP}(\A^{**})}={\mathcal{WAP}(\A)}^{**}=\A^{***}$, and so $\A^{**}$ is Arens regular.
\end{proof}
There exists a Banach algebra $\A$ such that $\A$ is Arens regular, but $\A^{**}$ is not Arens regular (see \cite{DL}).
\begin{examples}
\begin{enumerate}
\item Let $S=(\mathbb{N},\min)$. Then  $\ell_1(S)$ is a $\mathcal{WAP}$-algebra, but the second dual isn't so.
\item Let $S=(\mathbb{N},\max)$. Then  $\ell_1(S)$ is a dual Banach algebra, but the second dual isn't so.
\item Let $S$ be infinite zero semigroup, then   $\ell_1(S)^{**}$ is a dual Banch algebra but   $\ell_1(S)$ isn't so.
\end{enumerate}
\end{examples}

\section{ absolutely semigroup  measure algebras}

Following \cite{BJM}, a semitopological semigroup is a semigroup $S$  equipped with a Hausdorff  topology under which the multiplication of $S$ is separately continuous. If the multiplication of $S$ is jointly continuous then $S$ is said to be a topological semigroup. We write $\ell^{\infty}(S)$  for the commutative algebras of all bounded complex-valued   functions on $S$. In the case where $S$ is  locally compact  we also write $C(S)$ and $C_0(S)$ for the closed subalgebras of $\ell_\infty(S)$ consist of continuous elements and continuous elements  which vanish  at infinity, respectively. We also denote the space of all {\it weakly almost periodic} functions on $S$ by $\mathcal{WAP}(S)$ which is defined by $$\mathcal{WAP}(S)=\{f\in C(S): \{R_sf: s\in S\}\ \mbox{ is relatively weakly compact in}\quad C(S)\},$$ where $R_sf(t)=f(ts), \ (s,t\in S).$ Then $\mathcal{WAP}(S)$ is a closed subalgebra of $C(S)$. Many other  properties of $\mathcal{WAP}(S)$ and its inclusion relations among other function algebras are completely explored in \cite{BJM}.\\
Let $M_b(S)$ denotes the Banach space of all
bounded complex regular Borel measures on $S$ with the total variation norm.
Then $M_b(S)$ with the convolution multiplication $*$ defined by the equation
$$\langle  \mu*\nu,f\rangle=\int_S\int_Sf(xy)d\mu(x)d\nu(y)\quad (f\in C_0(S))$$
is a convolution measure algebra as the dual of $C_0(S)$.

 Let $\mu\in M_b(S)$ and let $L^\infty (|\mu|)$ be the Banach space of all bounded Borel-measurable functions on $S$ with essential supremum norm,$$||f||_{\mu}=\inf\{\alpha\geq 0:\{x\in S:|f(x)|>\alpha\}\quad\mbox{is}\quad |\mu|-\mbox{null}\}.$$ Consider the product linear space $\Pi\{L^\infty(|\mu|):\mu\in M_b(S)\}$. An element $f=(f_\mu)$ in this product is called a generalized function on $S$ if the following conditions are satisfied:
\begin{enumerate}
  \item $||f||_\infty=\sup\{||f||_\mu:\mu\in M_b(S)\}$ is finite.
  \item if $\mu,\nu\in M_b(S)$ and $|\mu|\ll|\nu|$, then $f_\mu=f_\nu$, $|\mu|$-a.e.
\end{enumerate}
Let $GL(S)$ denote the linear subspace of all generalized functions on $S$. Then $M_b(S)^*$ is isometrically isomorphic to $GL(S)$, see \cite{Lashkarizadeh}.
We identify  $C_0(S)$ with  its image by canonical map $\iota: C_0(S)\rightarrow GL(S)$. The $w^*$-topology on $GL(S)$ is $\sigma(GL(S),M_b(S))$.
Let $\A$ be a convolution  measure algebra in $M_b(S)$, i.e. a norm-closed solid subalgebra of $M_b(S)$. In other words, for  all $\mu\in M_b(S)$ and $\nu\in \A$ such that $|\mu|\ll|\nu|$, we have $\mu\in \A$.
The space of all measures $\mu\in  M_b(S)$ for which the maps $x \mapsto\delta_x*|\mu|$ and
$x \mapsto|\mu|*\delta_x$ from $S$ into $M_b(S)$ are weakly continuous is denoted by $M_a(S)$
, where $\delta_x$ denotes the Dirac measure at $x$. Then $M_a(S)$ is
a closed two-sided L-ideal of $M_b(S)$ ; see Baker and Baker \cite{BAKER} or Dzinotyiweyi
\cite{DZINO}. The locally compact semigroup $S$ is called foundation if , $F_a(S)$, the closure of the
set $$\cup\{supp(\mu): \mu\in M_a(S)\}$$  coincide with $S$ .

Let $F$ and $K$ be nonempty subsets of a semigroup $S$ and $s\in S$. We put $$s^{-1}F=\{t\in S:st\in F\}\mbox{, and}\quad Fs^{-1}=\{t\in S:ts\in F\}$$
and we also write $s^{-1}t $ for the set $s^{-1}\{t\}$, $FK^{-1}$
for $\cup\{Fs^{-1}:s\in K\}$ and $K^{-1}F$ for
$\cup\{s^{-1}F:s\in K\}$.

 The authors and H.R. Ebrahimi- Vishki in \cite{Bkh}, for a locally compact topological semigroup $S$  showed that $M_b(S)$ is a $\mathcal{WAP}$-algebra if and only if  the canonical  mapping $R:S\longrightarrow S^{wap}$ is one to one, where $S^{wap}$ is the weakly almost periodic compactification of $S$. In this section we show that for a foundation semigroup   $S$, this is equivalent to $M_a(S)$ is a $\mathcal{WAP}$-algebra.

 The following theorem is the main result of this section.
\begin{theorem}\label{wa}
Let $S$ be a foundation locally compact topological semigroup with identity. Then $M_a(S)$ is a $\mathcal{WAP}$-algebra if and only if $M_b(S)$ is so.
\end{theorem}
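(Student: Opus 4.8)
The plan is to treat the two implications separately. One direction is free: $M_a(S)$ is a norm-closed subalgebra of $M_b(S)$ (indeed a closed two-sided ideal), so if $M_b(S)$ is a $\mathcal{WAP}$-algebra then $M_a(S)$ is one by Corollary~\ref{WAP-algebra}. The substance of the theorem is the converse, and there the idea is to manufacture, from a reflexive representation of $M_a(S)$, a reflexive representation of $M_b(S)$, and then apply Proposition~\ref{equivalent}.

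Assume $M_a(S)$ is a $\mathcal{WAP}$-algebra. By Proposition~\ref{equivalent}\,((i)$\Rightarrow$(ii)) there are a reflexive Banach space $E$ and a bounded-below continuous representation $\pi:M_a(S)\to B(E)$; replacing $E$ by the closed linear span of $\pi(M_a(S))E$ (still reflexive, and with $\pi$ still bounded below on it, by a routine approximate-identity argument) we may assume $\pi$ is nondegenerate. The one external ingredient is the classical fact that, since $S$ is a foundation semigroup with identity $e$, the L-ideal $M_a(S)$ has a bounded approximate identity $(e_\alpha)$ with $\|e_\alpha\|=1$ and $e_\alpha\to\delta_e$ in $\sigma(M_b(S),C_0(S))$; in particular $\mu*e_\alpha\to\mu$ and $e_\alpha*\mu\to\mu$ in the $w^*$-topology of $M_b(S)$ for each $\mu\in M_b(S)$ (see \cite{DZINO}).

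The heart of the matter is to extend $\pi$ to $\tilde\pi:M_b(S)\to B(E)$ by $\tilde\pi(\mu)\xi:=\lim_\alpha\pi(\mu*e_\alpha)\xi$. For each $\xi$ the net $\big(\pi(\mu*e_\alpha)\xi\big)_\alpha$ is bounded, and on the dense set $\pi(M_a(S))E$ it converges in norm because $e_\alpha*a\to a$ in $M_a(S)$ and $M_a(S)$ is an ideal, so $\mu*e_\alpha*a\to\mu*a$ in $M_a(S)$; a uniform-boundedness argument upgrades this to norm convergence for every $\xi\in E$. Thus $\tilde\pi(\mu)\in B(E)$, $\tilde\pi$ is linear and bounded, and $\tilde\pi|_{M_a(S)}=\pi$. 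Multiplicativity uses the ideal property twice: since $\nu*e_\beta\in M_a(S)$, one gets $\tilde\pi(\mu)\pi(\nu*e_\beta)=\lim_\alpha\pi(\mu*e_\alpha*\nu*e_\beta)=\pi(\mu*\nu*e_\beta)$, and letting $\beta$ vary yields $\tilde\pi(\mu)\tilde\pi(\nu)=\tilde\pi(\mu*\nu)$. Finally $\tilde\pi$ is bounded below: from $\tilde\pi(\mu)\pi(a)=\pi(\mu*a)$ $(a\in M_a(S))$ and the fact that $\pi$ is bounded below, $\|\mu*a\|\le C\,\|\tilde\pi(\mu)\|\,\|a\|$ for a constant $C$ not depending on $\mu,a$; taking $a=e_\alpha$ and using $\liminf_\alpha\|\mu*e_\alpha\|\ge\|\mu\|$ (that is, $w^*$-lower semicontinuity of the norm on $M_b(S)=C_0(S)^*$ together with $\mu*e_\alpha\to\mu$) gives $\|\tilde\pi(\mu)\|\ge\|\mu\|/C$. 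Hence $\tilde\pi$ is a bounded, bounded-below representation of $M_b(S)$ on the reflexive space $E$, and Proposition~\ref{equivalent}\,((ii)$\Rightarrow$(i)) shows $M_b(S)$ is a $\mathcal{WAP}$-algebra.

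The step I expect to be the main obstacle is the construction of $\tilde\pi$: verifying that $\tilde\pi(\mu)$ is a well-defined bounded operator and, above all, that $\tilde\pi$ is multiplicative, which forces one to treat the iterated limits in $\alpha$ and $\beta$ carefully and to keep precise track of when a convolution product returns to the ideal $M_a(S)$, where the approximate identity is available in norm rather than only in the $w^*$-topology. An alternative route would be to invoke the criterion from \cite{Bkh} that $M_b(S)$ is a $\mathcal{WAP}$-algebra precisely when the canonical map $R:S\to S^{wap}$ is injective, and to derive that injectivity from the hypothesis that $\mathcal{WAP}(M_a(S))$ separates the points of $M_a(S)$ by testing on measures of the form $e_\alpha*\delta_s$; but checking that every element of $\mathcal{WAP}(M_a(S))$ is compatible with $R$ looks at least as delicate as the construction above.
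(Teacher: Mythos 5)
Your argument is correct, but it takes a genuinely different route from the paper's. In the nontrivial direction the paper never touches representations: it invokes Dzinotyiweyi's lemma that for $x_1\neq x_2$ in $S$ there is $\mu\in M_a(S)$ with $\delta_{x_1}*\mu\neq\delta_{x_2}*\mu$, then uses Lashkarizadeh Bami's identification $\mathcal{WAP}(M_a(S))=\mathcal{WAP}(S)$ (\cite[Theorem 3.7]{Lashkarizadeh}) to turn the point-separation of $M_a(S)$ by $\mathcal{WAP}(M_a(S))$ into injectivity of the canonical map $R:S\to S^{wap}$, and finally quotes the authors' earlier criterion from \cite{Bkh} that injectivity of $R$ is equivalent to $M_b(S)$ being a $\mathcal{WAP}$-algebra. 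This is exactly the ``alternative route'' you sketch and set aside at the end; the ingredient you were missing that makes it painless is the equality $\mathcal{WAP}(M_a(S))=\mathcal{WAP}(S)$, which removes the need to check compatibility of arbitrary weakly almost periodic functionals with $R$. Your multiplier-type extension of a nondegenerate, bounded-below reflexive representation of the ideal $M_a(S)$ to all of $M_b(S)$ is more self-contained (it needs only Proposition \ref{equivalent} and the standard approximate identity of $M_a(S)$) and produces an explicit reflexive representation of $M_b(S)$; the paper's route is shorter on the page but leans on two prior semigroup-specific results, and in exchange directly recovers the link with $S^{wap}$ that the subsequent corollaries use. One caveat on your write-up: the convergence $\mu*e_\alpha\to\mu$ in $\sigma(M_b(S),C_0(S))$ for arbitrary $\mu\in M_b(S)$ is \emph{not} a formal consequence of $e_\alpha\to\delta_e$ weak$^*$, since convolution on $M_b(S)$ need not be separately weak$^*$-continuous; what gives it is the shrinking-support, probability-measure form of the approximate identity together with regularity of $\mu$ and joint continuity of the multiplication. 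That is indeed what \cite{DZINO} supplies for foundation semigroups with identity, and it is precisely where the foundation-with-identity hypothesis enters your proof, just as it enters the paper's proof through Dzinotyiweyi's separation lemma.
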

\begin{proof}
If $M_b(S)$ is a $\mathcal{WAP}$-algebra then $M_a(S)$ is a closed subalgebra in it, so $M_a(S)$ is a $\mathcal{WAP}$-algebra.

 As  the proof of lemma 1.8 \cite[p.16]{DZINO} for all  $x_1,x_2\in S$ with $x_1\not=x_2$ there is $\mu\in M_a(S)$ such that  \[\delta_{x_1}*\mu\not=\delta_{x_2}*\mu.\]
Let $M_a(S)$  is a $\mathcal{WAP}$-algebra, then for all $\mu\in M_a(S)$ we have $||\mu||=\sup\{|\langle f,\mu\rangle|: ||f||\leq1, f\in \mathcal{WAP}(M_a(S))\}$ and by \cite[Theorem3.7]{Lashkarizadeh} $\mathcal{WAP}(S)= \mathcal{WAP}(M_a(S))$, so  $||\mu||=\sup\{|\langle f,\mu\rangle|: ||f||\leq1, f\in \mathcal{WAP}(S)\}$.  Thus there  is a  $f\in \mathcal{WAP}(S)$ such that
 \[\langle\mu,R_{x_1}f\rangle=\langle \delta_{x_1}*\mu,f\rangle\not=\langle\delta_{x_2}*\mu,f\rangle=\langle\mu,R_{x_2}f\rangle\]
 Thus  $R_{x_1}\not=R_{x_2}$. This means the canonical  mapping $R:S\longrightarrow S^{wap}$ is one to one, where $S^{wap}$ is the weakly almost periodic compactification of $S$.
  By \cite[Theorem 3.1]{Bkh} $M_b(S)$ is a $\mathcal{WAP}$-algebra.
 \end{proof}
 By using \cite[Theorem3.1]{Bkh} and the previous Theorem, the following is immediate.
\begin{corollary}
Let $S$ be a foundation semigroup. The following statements are equivalent.
\begin{enumerate}
\item $M_a(S)$ is $\mathcal{WAP}$-algebra.
\item $\ell_1(S)$ is a $\mathcal{WAP}$-algebra.
\item $M_b(S)$ is a $\mathcal{WAP}$-algebra.
\end{enumerate}
\end{corollary}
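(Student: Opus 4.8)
The strategy is to read off the equivalence $(1)\Leftrightarrow(3)$ directly from Theorem~\ref{wa}, and to obtain $(2)\Leftrightarrow(3)$ by combining the hereditary behaviour of the $\mathcal{WAP}$-property (Corollary~\ref{WAP-algebra}) with \cite[Theorem 3.1]{Bkh}; throughout, "foundation semigroup" is read with the standing hypotheses of this section (locally compact topological, with identity). For $(1)\Leftrightarrow(3)$ there is nothing to prove: it is precisely Theorem~\ref{wa}.

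For $(3)\Rightarrow(2)$ I would use that $\delta_x\mapsto\delta_x$ identifies $\ell_1(S)$ with the band of purely atomic measures inside $M_b(S)$: mutual singularity of the point masses makes this embedding isometric, $\delta_x*\delta_y=\delta_{xy}$ makes it an algebra homomorphism, and a norm-limit of discrete measures is again discrete, so the image is norm-closed. Hence $\ell_1(S)$ is (isomorphic to) a closed subalgebra of $M_b(S)$, and Corollary~\ref{WAP-algebra} yields that $\ell_1(S)$ is a $\mathcal{WAP}$-algebra whenever $M_b(S)$ is one.

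The real content is $(2)\Rightarrow(3)$, which I would handle by mimicking the proof of Theorem~\ref{wa}. If $\ell_1(S)$ is a $\mathcal{WAP}$-algebra, then $\|\nu\|=\sup\{|\langle f,\nu\rangle|:f\in\mathcal{WAP}(\ell_1(S)),\ \|f\|\le 1\}$ for every $\nu\in\ell_1(S)$; applied to $\nu=\delta_{x_1}-\delta_{x_2}\neq 0$ (for $x_1\neq x_2$ in $S$) this produces $f\in\mathcal{WAP}(\ell_1(S))$ with $f(x_1)\neq f(x_2)$. To conclude that the canonical map $R\colon S\to S^{wap}$ is injective — and then quote \cite[Theorem 3.1]{Bkh} to get that $M_b(S)$ is a $\mathcal{WAP}$-algebra — one must upgrade this to a \emph{continuous} weakly almost periodic function on $S$ separating $x_1$ from $x_2$, and this is the main obstacle. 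The supporting bookkeeping is routine: $f\cdot\delta_s$ is the left translate $x\mapsto f(sx)$, so an element of $\mathcal{WAP}(\ell_1(S))$ has relatively weakly compact left-translation orbit in $\ell_\infty(S)$, whence, by Krein's theorem together with the fact that weak compactness of a subset of the norm-closed subspace $C(S)$ is the same whether computed in $C(S)$ or in $\ell_\infty(S)$, the continuous members of $\mathcal{WAP}(\ell_1(S))$ are exactly the functions in $\mathcal{WAP}(S)$, and conversely every $f\in\mathcal{WAP}(S)$ gives an element of $\mathcal{WAP}(\ell_1(S))$. What remains — and where I expect the difficulty to sit — is to see that the separating functionals above may already be taken continuous; I would establish this either by invoking the full statement of \cite[Theorem 3.1]{Bkh} (which should list $\ell_1(S)$ alongside $M_b(S)$), or by transporting the identification $\mathcal{WAP}(M_a(S))\cong\mathcal{WAP}(S)$ of \cite[Theorem 3.7]{Lashkarizadeh} used in Theorem~\ref{wa} to $\ell_1(S)$ via the foundation hypothesis. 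Once $R$ is injective, \cite[Theorem 3.1]{Bkh} delivers $(3)$, closing the cycle $(1)\Leftrightarrow(3)\Leftrightarrow(2)$.
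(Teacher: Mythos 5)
Your proposal is correct and follows essentially the same route as the paper, which simply observes that the corollary is immediate from Theorem~\ref{wa} (giving $(1)\Leftrightarrow(3)$) together with \cite[Theorem 3.1]{Bkh} (giving $(2)\Leftrightarrow(3)$). The extra material you supply --- the isometric embedding of $\ell_1(S)$ as a closed subalgebra of $M_b(S)$ for $(3)\Rightarrow(2)$, and the discussion of continuity of the separating functional for $(2)\Rightarrow(3)$ --- is a sound elaboration of what the paper leaves to the citation, and your fallback of invoking the full statement of \cite[Theorem 3.1]{Bkh} is exactly what the paper does.
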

By using \cite[Corollary8]{RJ} and Theorem\ref{rty}, the following is immediate.
\begin{corollary}
Let $S$ be a foundation semigroup. The following statements are equivalent.
\begin{enumerate}
\item $M_a(S)^{**}$ is $\mathcal{WAP}$-algebra.
\item $\ell_1(S)^{**}$ is a $\mathcal{WAP}$-algebra.
\item $M_b(S)^{**}$ is a $\mathcal{WAP}$-algebra.
\end{enumerate}
\end{corollary}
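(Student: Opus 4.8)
The plan is to apply Theorem \ref{rty} to each of the algebras $M_a(S)$, $\ell_1(S)$ and $M_b(S)$ in order to replace every occurrence of ``$\A^{**}$ is a $\mathcal{WAP}$-algebra'' by ``$\A$ is Arens regular'', and then to invoke the known equivalence of Arens regularity for these three algebras over a foundation semigroup.

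First I would record the reduction. Let $\A$ be any one of $M_a(S)$, $\ell_1(S)$, $M_b(S)$. By the equivalence of statements (i), (iv) and (v) of Theorem \ref{rty}, the algebra $(\A^{**},\square)$ is a $\mathcal{WAP}$-algebra if and only if $(\A^{**},\lozenge)$ is a $\mathcal{WAP}$-algebra if and only if $\A$ is Arens regular; in particular the phrase ``$\A^{**}$ is a $\mathcal{WAP}$-algebra'' is unambiguous with respect to the choice of Arens product and is equivalent to the Arens regularity of $\A$. Hence the three assertions of the corollary are equivalent, respectively, to: $M_a(S)$ is Arens regular; $\ell_1(S)$ is Arens regular; $M_b(S)$ is Arens regular.

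It then remains only to see that, for a foundation semigroup $S$, these three Arens-regularity statements are equivalent to one another. This is precisely the content of \cite[Corollary 8]{RJ}, and combining it with the reduction of the previous paragraph completes the proof. The structure here exactly parallels that of the preceding corollary of this section, with \cite[Theorem 3.1]{Bkh} and Theorem \ref{wa} there replaced by \cite[Corollary 8]{RJ} and Theorem \ref{rty} here.

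I do not anticipate a serious obstacle, the argument being a formal combination of two already available results. The one point worth flagging is that, for a general foundation semigroup, $\ell_1(S)$ need not be a subalgebra of $M_a(S)$, and in any case Arens regularity does not pass to or from closed ideals; so the equivalence involving $\ell_1(S)$ genuinely relies on the semigroup-theoretic input of \cite[Corollary 8]{RJ} (which uses the foundation hypothesis and the $L$-ideal structure of $M_a(S)$ in $M_b(S)$) rather than on abstract Banach-algebra heredity. Should \cite[Corollary 8]{RJ} be formulated only for part of the list, one would supplement it with the fact that $M_a(S)$ is a closed subalgebra of $M_b(S)$ and with the preceding corollary together with the foundation property; but I expect the cited corollary to cover all three algebras directly.
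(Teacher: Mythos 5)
Your proposal is correct and follows exactly the route the paper takes: the paper's justification is precisely ``By using \cite[Corollary 8]{RJ} and Theorem \ref{rty}, the following is immediate,'' i.e.\ Theorem \ref{rty} converts each statement ``$\A^{**}$ is a $\mathcal{WAP}$-algebra'' into ``$\A$ is Arens regular,'' and \cite[Corollary 8]{RJ} supplies the equivalence of Arens regularity for $M_a(S)$, $\ell_1(S)$ and $M_b(S)$ over a foundation semigroup. Your additional remarks on the independence from the choice of Arens product and on why the $\ell_1(S)$ case needs the semigroup-theoretic input are accurate elaborations of what the paper leaves implicit.
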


 The next example shows that in  Theorem\ref{wa} the condition $S$ be foundation semigroup can not be omitted.
\begin{examples}\begin{enumerate}\item[(a)] Let $S$ be a locally compact semigroup.  If $C_0(S)\subseteq \mathcal{WAP}(S)$ then $M_a(S)$ is a $\mathcal{WAP}$-algebra.
\item[(b)] Let $S$ be a compact semigroup, then $M_a(S)$ is a $\mathcal{WAP}$-algebra.
 \item[(c)]  Let $S=\Bbb R^2$ and   $(x,y).(\alpha,\beta)=(\alpha x,\alpha y+\beta)$ with the standard Euclidean topology. Then $ M_a(S)=\{0\}$ and so $S$ is not a foundation semigroup. In fact,  let $x_n=(0,1+\frac{1}{n})$, $x_0=(0,1)$, $K=[-1,1]\times [-1,1]$ and $\mu\in M_a^r(S)$. Then  $Kx_n^{-1}=\emptyset$ and  $Kx_0^{-1}=S$. Then by continuity of $\mu$, we have $0=|\mu|(Kx_n^{-1})\rightarrow |\mu|(Kx_0^{-1})=||\mu||$. So $ M_a(S)=\{0\}$.

 Let  $f\in C_0(S)$ and $f(0,c)\not=0$  for some $c\in \mathbb{R}$. Suppose  $a_n=(0,n)$ and $b_m=(1,\gamma_m)$ where $\gamma_m$ is on the line $y=-n x+c$. Then
$$\lim_n\lim_mf(a_nb_m)=f(0,c)\not=\lim_m\lim_nf(a_nb_m)=0$$
 So by examining the iterate limits we certify that $f\not\in \mathcal{WAP}(S)$. Thus $\mathcal{WAP}(S)$ does not separate the points of $Y$ and so $M_b(S)$. This show that the map $R:S\rightarrow S^{wap}$ is not one to one. Therefore  $M_b(S)$ isn't a $\mathcal{WAP}$- algebra ( see \cite{Bkh} also) but clearly $M_a(S)$ is a  $\mathcal{WAP}$- algebra.
\end{enumerate}
\end{examples}

\noindent{{\bf Acknowledgments.}}  This research was supported by
the Center of Excellence for Mathematics at the Isfahan university.

\end{document}